\newcommand*{\QED}{\nopagebreak\hfill\nopagebreak\rule{2mm}{2mm}\par\bigskip}%
\newtheorem{theorem}{Theorem}[section]
\newtheorem{lemma}[theorem]{Lemma}
\newtheorem{conjecture}[theorem]{Conjecture}
\newtheorem{proposition}[theorem]{Proposition}
\newtheorem{corollary}[theorem]{Corollary}
\newtheorem{question}[theorem]{Question}
\newenvironment{proof}{\paragraph{\textbf{Proof}}}
 {\nopagebreak\hfill\nopagebreak\rule{2mm}{2mm}\par\bigskip}
\newcommand{\maxmod}[1]{\textnormal{maxmod}(#1)} 
\newcommand{\maxmodt}[1]{\textnormal{maxmod}_{T}(#1)} 
\newcommand{\maxmodf}[1]{\textnormal{maxmod}_{F}(#1)} 
\begin{document}

%\title{Maximum Modulus of Independence Roots of Graphs and Trees%\thanks{Grants or other notes
%about the article that should go on the front page should be
%placed here. General acknowledgments should be placed at the end of the article.}

%\subtitle{Do you have a subtitle?\\ If so, write it here}

%\titlerunning{Short form of title}        % if too long for running head

\title{Maximum Modulus of Independence Roots of Graphs and Trees}
\author{Jason I.~Brown\footnote{Corresponding author. {\tt jason.brown@dal.ca}} and Ben Cameron\\
Department of Mathematics and Statistics \\
Dalhousie University, Halifax, NS B3H 3J5, Canada\\
\date{}
}
\maketitle

\begin{abstract}
 The \textit{independence polynomial} of a graph is the generating polynomial for the number of independent sets of each size and its roots are called \textit{independence roots}. We bound the maximum modulus, $\maxmod{n}$, of an independence root over all graphs on $n$ vertices and the maximum modulus, $\maxmodt{n}$, of an independence root over all trees on $n$ vertices in terms of $n$. In particular, we show that
 $$\frac{\log_3(\maxmod{n})}{n}=\frac{1}{3}+o(1)$$ 
 and
 $$\frac{\log_2(\maxmodt{n})}{n}=\frac{1}{2}+o(1).$$ 
%Insert your abstract here. Include keywords, PACS and mathematical
%subject classification numbers as needed.
%\keywords{graph \and independence polynomial \and root \and maximum modulus}
% \PACS{PACS code1 \and PACS code2 \and more}
% \subclass{MSC code1 \and MSC code2 \and more}
\end{abstract}

\section{Introduction}\label{Secintro}
The \textit{independence number} of a graph $G$, denoted $\alpha(G)$, is the maximum size of an independent of $G$. The \textit{independence polynomial} of $G$, denoted $i(G,x)$, is the generating polynomial for the number of independence sets of each size:
$$i(G,x)=\sum_{k=0}^{\alpha(G)}i_kx^k,$$
\noindent where $i_k$ denotes the number of independent sets of size $k$ in $G$. (When dealing with the independence polynomials of multiple graphs, we will distinguish the coefficients with a superscript to avoid confusion, so that $i_{k}^{G}$ is the number of independent sets of size $k$ in $G$.) The roots of $i(G,x)$ are called the \textit{independence roots} of $G$.  %A sequence $\langle a_0,a_1,\ldots,a_n\rangle$ is called \textit{unimodal} if for some $k\in\{0,1,\ldots,n\}$, $a_0\le \cdots \le a_{k-1}\le a_{k}\ge a_{k+1}\ge \cdots \ge a_n$. It is called \textit{log-concave} if $a_k^2\ge a_{k-1}a_{k+1}$ for all $k\in\{1,2,\ldots,n-1\}$. A log-concave sequence of positive numbers is also unimodal since $\frac{a_k}{a_{k+1}}\ge\frac{a_{k-1}}{a_k}$ for all $k\in\{1,2,\ldots,n-1\}$ the ratio of consecutive coefficients is increasing.$\xi(G)$

The independence polynomial was first introduced by Gutman and Harary in 1983 \cite{INDFIRST} and has been a fascinating object of study ever since (see Levit and Mandrescu's survey \cite{INDPOLY}). One topic that has generated much research is on the independence roots \cite{BrownCameron2018stability,BrownCameron2018vwc,BDN2000,INDROOTS,BrownNowakowski2001,Chudnovsky2007,Csikvari2013,Levit2008,Oboudi2018treeroots}. 

The roots of other graph polynomials have also been of interest and the nature and location in $\mathbb{C}$ of these roots can vary considerably depending on the polynomial (see \cite{Makowsky2014}). Determining bounds on the moduli of these roots is an important question. In 1992, the first author and Colbourn \cite{BrownColbourn1992} conjectured that the roots of reliability polynomials lie in the unit disk. The Brown-Colbourn conjecture stood for 12 years until it was shown to be false (although just barely) in \cite{RoyleSokal2004}. It was later shown that if $G$ is a connected graph on $n$ vertices and $q$ is a reliability root, then $|q|\le n-1$, yet the largest known reliability root has modulus approximately $1.113486$ \cite{BrownMol2017}. It is still believed that the reliability roots are bounded by some constant although the problem remains open. A polynomial that is more closely related to the independence polynomial is the edge cover polynomial and it was recently shown that its roots are bounded, in fact contained in the disk $|z|<\frac{(2+\sqrt{3})^2}{1+\sqrt{3}}$ \cite{Csikvari2011}. In contrast, the collection of all roots of independence polynomials \cite{INDROOTS}, domination polynomials \cite{BrownTufts2014}, and chromatic polynomials \cite{Sokal2004} are each dense in $\mathbb{C}$. 

Although these polynomials have roots with arbitrarily large moduli, an interesting question to ask is: for fixed $n$, how large can the modulus of a root of one of these polynomials be for a graph in $n$ vertices? Sokal \cite{Sokal2001} showed that all simple graphs on $n$ vertices have their chromatic roots contained in the disk $|z|\le 7.963907(n-1)$, so that the maximum moduli of chromatic roots grows at most linearly in $n$. The growth rate of domination roots is unknown. There has been work done on bounding the independence roots; for example, it was shown in \cite{BrownNowakowski2001} that for fixed $\alpha$, the largest modulus of an independence root of a graph with independence number $\alpha$ on $n$ vertices is $\left(\frac{n}{\alpha-1}\right)^{\alpha-1}+O(n^{\alpha-2})$. Although this bound is tight, the $O(n^{\alpha-2})$ term hides enough information to make it unclear if the maximum moduli of independence roots is a polynomial in $n$ or exponential in $n$. In this paper, we consider the problem of fixing $n$ as the number of vertices and determining the maximum modulus of an independence root over all graphs on $n$ vertices. We will show that the growth rate is indeed exponential. To that end, let $\maxmod{n}$ denote the maximum modulus of an independence root over all graphs on $n$ vertices and $\maxmodt{n}$ denote the maximum modulus of an independence root over all trees on $n$ vertices. We show that, in contrast to Sokal's linear bound for chromatic roots, $\maxmod{n}$ and $\maxmodt{n}$ are both exponential in $n$: in Section~\ref{sec:graph}, we prove that $$3^{\frac{n-r+3}{3}}\le \maxmod{n}\le 3^{\frac{n}{3}}+n-1,$$ where $1\le r\le 3$, while in Section~\ref{sec:trees}, we prove that $$2^{\frac{n-1}{2}} \le \maxmodt{n}\le 2^{\frac{n-1}{2}}+\frac{n-1}{2}$$ if $n$ is odd and $$2^{\frac{n-6}{2}}\le \maxmodt{n}\le 2^{\frac{n-2}{2}}+\frac{n}{2}$$ if $n$ is even.

We shall need some notation. The number of maximum independent sets in $G$ is denoted by $\xi(G)$. The number of maximal independent sets in $G$ is denoted $\mu(G)$. Note that $\xi(G)=i_{\alpha(G)}^G$, the leading coefficient of the independence polynomial of $G$. For $S\subseteq V(G)$, let $G-S$ be the graph obtained from $G$ by deleting all vertices of $S$ as well as their incident edges. If $S=\{v\}$, we will use the shorthand, $G-v$ to denote $G-\{v\}$.

\section{Bounds on the maximum modulus of independence roots}\label{sec:graph}
To bound the roots of independence polynomials, we will make extensive use of the classical Enestr\"{o}m-Kakeya Theorem which uses the ratios of consecutive coefficients of a given polynomial to describe an annulus in $\mathbb{C}$ that contains all its roots.  

\begin{theorem}[Enestr\"{o}m-Kakeya \cite{Enestrom,Kakeya1912}]\label{thm:EK} If $f (x) = a_0+ a_1x +\cdots + a_nx^n$ has positive real coefficients, then all complex roots of $f$ lie in the annulus $r \le |z| \le R$ where 

$$r = \min\left\lbrace\frac{a_i}{a_{i+1}}: 0 \le i \le n-1\right\rbrace \text{ and } R = \max\left\lbrace\frac{a_i}{a_{i+1}}: 0 \le i \le n-1\right\rbrace.$$
\end{theorem}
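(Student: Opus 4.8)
The plan is to establish the upper bound $|z|\le R$ first and then deduce the lower bound $|z|\ge r$ by applying it to the reverse polynomial.

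\textbf{Upper bound.} First I would normalize. If $z$ is a root of $f$, then $w:=z/R$ is a root of $g(x):=f(Rx)=\sum_{i=0}^n a_iR^i x^i=:\sum_{i=0}^n b_ix^i$, and by the definition of $R$ we have $b_i/b_{i+1}=(a_i/a_{i+1})/R\le 1$, so $0<b_0\le b_1\le\cdots\le b_n$. Hence it suffices to show that a polynomial with positive, non-decreasing coefficients has all of its roots in the closed unit disk. To do this I would multiply by $(x-1)$; a short computation gives
$$(x-1)g(x)=b_nx^{n+1}-\sum_{j=1}^n(b_j-b_{j-1})x^j-b_0.$$
Suppose, for contradiction, that $g(w)=0$ with $|w|>1$. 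Rearranging $(w-1)g(w)=0$ and applying the triangle inequality gives
$$b_n|w|^{n+1}\le\sum_{j=1}^n(b_j-b_{j-1})|w|^j+b_0\le |w|^n\left(\sum_{j=1}^n(b_j-b_{j-1})+b_0\right)=b_n|w|^n,$$
where the middle step uses $b_j-b_{j-1}\ge 0$ together with $|w|^j\le|w|^n$ and $1\le|w|^n$, and the last equality is a telescoping sum. Dividing by $b_n|w|^n>0$ yields $|w|\le 1$, a contradiction. Thus every root of $g$, and therefore every root of $f$, lies in $|z|\le R$.

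\textbf{Lower bound.} Since $a_0>0$, the origin is not a root, so every root $z$ of $f$ is nonzero and $1/z$ is a root of the reverse polynomial $x^nf(1/x)=\sum_{j=0}^n a_{n-j}x^j$, whose coefficients are again positive. Applying the upper bound already proved to this polynomial shows $|1/z|\le \max_{0\le i\le n-1} a_{i+1}/a_i = 1/r$, i.e., $|z|\ge r$, which completes the proof.

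I do not expect a genuine obstacle: the only idea needed is the multiplication by $(x-1)$ that produces the telescoping cancellation, and the rest is bookkeeping. The one place to be careful is the chain of inequalities — the bounds $|w|^j\le|w|^n$ and $1\le|w|^n$ are exactly where the assumption $|w|>1$ is used — and one should check that the degenerate cases ($n=0$, or equal consecutive coefficients) cause no trouble.
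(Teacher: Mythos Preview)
Your proof is correct and is essentially the classical argument for the Enestr\"om--Kakeya theorem. Note, however, that the paper does not supply its own proof of this statement: Theorem~\ref{thm:EK} is quoted as a known result with citations to the original sources, so there is no in-paper proof to compare against. Your normalization-plus-$(x-1)$-multiplication approach is the standard one found in the literature.
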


We will also need to make use of two basic results on computing the independence polynomial.

\begin{proposition}[\cite{INDFIRST}]\label{prop:deletion}
If $G$ and $H$ are graphs and $v\in V(G)$, then: 
\begin{itemize}
\item[i)] $i(G,x)=i(G-v,x)+x\cdot i(G-N[v],x)$.
\item[ii)] $i(G\cup H,x)=i(G,x)i(H,x)$.
\end{itemize}

\end{proposition}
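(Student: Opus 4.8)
The final statement in the excerpt is Proposition~\ref{prop:deletion} — the two standard identities for the independence polynomial. Let me sketch how I would prove it.

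The plan is to prove both identities by partitioning the collection of independent sets according to a natural combinatorial dichotomy and then matching each piece with a term on the right-hand side.

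For part (i), I would fix a vertex $v \in V(G)$ and split the independent sets of $G$ into those that avoid $v$ and those that contain $v$. The independent sets of $G$ not containing $v$ are exactly the independent sets of $G-v$, which contributes $i(G-v,x)$. For an independent set $S$ containing $v$, the set $S$ can contain no neighbour of $v$, so $S \setminus \{v\}$ is an independent set of $G - N[v]$; conversely, adjoining $v$ to any independent set of $G-N[v]$ yields an independent set of $G$ of one larger size. This bijection between independent sets of $G$ containing $v$ and independent sets of $G-N[v]$ shifts sizes up by one, hence contributes $x \cdot i(G-N[v],x)$. Summing the generating functions of the two classes gives the identity. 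The one point requiring a moment's care is the degenerate case where $G - N[v]$ is the empty graph (when $v$ is adjacent to everything, or $G$ has a single vertex): there $i(G-N[v],x) = 1$, corresponding to the single independent set $\{v\}$, and the formula still holds.

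For part (ii), I would use that every independent set of the disjoint union $G \cup H$ is uniquely the union of an independent set of $G$ and an independent set of $H$ (since there are no edges between the two parts, independence is inherited componentwise and there is no interaction), with sizes adding. Thus $i^{G\cup H}_k = \sum_{j=0}^{k} i^G_j i^H_{k-j}$, which is precisely the coefficient of $x^k$ in the product $i(G,x) i(H,x)$; equivalently one can phrase this directly as the generating-function identity $i(G\cup H,x) = \sum_S x^{|S|} = \big(\sum_{S_G} x^{|S_G|}\big)\big(\sum_{S_H} x^{|S_H|}\big) = i(G,x)\,i(H,x)$, where $S$ ranges over independent sets of $G\cup H$ and $S_G, S_H$ over those of $G, H$ respectively.

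There is no real obstacle here — this is a foundational lemma and the argument is a short combinatorial bijection in each case. The only thing to be vigilant about is handling the edge cases (empty graphs, the convention $i(\emptyset\text{-graph},x)=1$) cleanly so that the stated identities hold without exception; everything else is immediate from the definition of the independence polynomial as a generating function over independent sets.
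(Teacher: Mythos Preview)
Your argument is correct and is the standard proof of these two identities. Note, however, that the paper does not actually give its own proof of this proposition: it is stated with a citation to \cite{INDFIRST} and used as a known tool, so there is nothing to compare against beyond observing that your bijective argument is exactly the one found in the literature.
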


Note that from Proposition~\ref{prop:deletion}, $\xi(G\cup H)=\xi(G)\cdot \xi(H)$. Our proofs are inductive and often require upper bounds $\xi(G)$ for all graphs on $n$ vertices, a collection of which can be found in \cite{Jou2000}.

\begin{theorem}[\cite{MoonMoser1965}]\label{thm:maxsetsgraph}
If $G$ is a graph of order $n\ge 2$, then 
$$\xi(G)\le\mu(G)\le g(n)= \left\{
\begin{array}{ll}
      3^{\frac{n}{3}} &~~~~\textnormal{if } n\equiv 0 \mod 3 \\      
      & \\
      4\cdot 3^{\frac{n-4}{3}} &~~~~\textnormal{if } n\equiv 1 \mod 3 \\      
      & \\
      2\cdot 3^{\frac{n-2}{3}}&~~~~\textnormal{if } n\equiv 2 \mod 3 \\      
\end{array}\right..$$

\end{theorem}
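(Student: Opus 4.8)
The plan is to prove the substantive inequality $\mu(G)\le g(n)$ by strong induction on $n$ (the argument is essentially that of Moon and Moser); the bound $\xi(G)\le\mu(G)$ is immediate, since every maximum independent set is in particular maximal. It is convenient to extend $g$ by declaring $g(0)=g(1)=1$, so that $\mu(H)\le g(|V(H)|)$ holds for every graph $H$ on at most one vertex as well, and to record first two elementary properties of $g$, each verifiable by a short computation with consecutive ratios $g(k+1)/g(k)$: that $g$ is non-decreasing, and that the ``growth inequality'' $(d+1)\,g(n-1-d)\le g(n)$ holds for all integers $n\ge 1$ and all $d$ with $1\le d\le n-1$.

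For the inductive step, let $G$ have $n\ge 3$ vertices and let $v$ be a vertex of minimum degree $d=\delta(G)$, with neighbours $v_1,\dots,v_d$. If $d=0$, then every maximal independent set of $G$ contains $v$, so $\mu(G)=\mu(G-v)\le g(n-1)\le g(n)$ by the induction hypothesis and monotonicity. Assume $d\ge 1$. Each maximal independent set containing $v$ restricts (by deleting $v$) to a maximal independent set of $G-N[v]$, and this correspondence is a bijection, so there are exactly $\mu(G-N[v])$ of them. A maximal independent set $I$ with $v\notin I$ must contain a neighbour of $v$ (otherwise $I\cup\{v\}$ would be independent, contradicting maximality), and so is counted among the maximal independent sets of $G$ containing $v_i$ for at least one index $i$; moreover the maximal independent sets of $G$ containing a fixed vertex $v_i$ correspond bijectively to the maximal independent sets of $G-N[v_i]$. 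Hence
$$\mu(G)\ \le\ \mu(G-N[v])+\sum_{i=1}^{d}\mu(G-N[v_i]).$$
Since $G-N[v]$ has $n-1-d$ vertices and each $G-N[v_i]$ has $n-1-\deg(v_i)\le n-1-d$ vertices (because $v$ has minimum degree), the induction hypothesis and monotonicity bound every summand by $g(n-1-d)$, giving $\mu(G)\le(d+1)\,g(n-1-d)$; the growth inequality then yields $\mu(G)\le g(n)$.

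The only genuinely fiddly ingredient is the growth inequality, which I expect to be the main obstacle; it reduces to a finite case analysis according to the residues of $n$ and of $n-1-d$ modulo $3$. The key observation making this routine is that $g(k)=c_k\,3^{k/3}$ with $c_k\in\{1,\ 4\cdot 3^{-4/3},\ 2\cdot 3^{-2/3}\}$, a set of constants all at most $1$ and all close to $1$, so that $g(n)/g(n-1-d)$ is of order $3^{(d+1)/3}$; for $d\ge 4$ this exceeds $d+1$ with room to spare, so only $d\in\{1,2,3\}$ require individual checking. In those cases the inequality turns out to be tight precisely where equality occurs in the theorem — in particular $3\,g(n-3)=g(n)$ for every $n\ge 3$, mirroring the operation of adjoining a disjoint triangle — and the extremal graphs are correspondingly disjoint unions of triangles, modified when $3\nmid n$ by using one $K_2$, one $K_4$, or two $K_2$'s in place of triangles. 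The base cases $n\in\{0,1,2\}$ are verified directly against $g(0)=g(1)=1$ and $g(2)=2$.
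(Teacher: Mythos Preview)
Your argument is correct and is essentially the classical Moon--Moser proof: pick a vertex $v$ of minimum degree, partition the maximal independent sets according to which vertex of $N[v]$ they contain, and bound each piece inductively via the arithmetic inequality $(d+1)\,g(n-1-d)\le g(n)$. The bijections you invoke between maximal independent sets of $G$ containing a fixed vertex $w$ and maximal independent sets of $G-N[w]$ are valid, and your case analysis for the growth inequality goes through (one small remark: your extension $g(1)=1$ does not agree with the formula $4\cdot 3^{(1-4)/3}=4/3$, but since $g(1)$ only ever appears in the denominator this discrepancy works in your favour and causes no trouble).

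There is, however, nothing in the paper to compare against: Theorem~\ref{thm:maxsetsgraph} is quoted from \cite{MoonMoser1965} as a known tool and is not re-proved here. So your write-up supplies a proof where the paper simply cites one; the approach you give is the standard one from the original source.
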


Note that an easy corollary of this is that for a graph on $n$ vertices, $\xi(G)\le\mu(G)\le 3^{\frac{n}{3}}$, since $\xi(K_1)=\mu(K_1)=1\le 3^{\frac{1}{3}}$, $3^{\frac{n}{3}}\ge 4\cdot 3^{\frac{n-4}{3}}$, and $3^{\frac{n}{3}}\ge 2\cdot 3^{\frac{n-2}{3}}$ for all $n\ge 1$.

\setcounter{subfigure}{0}
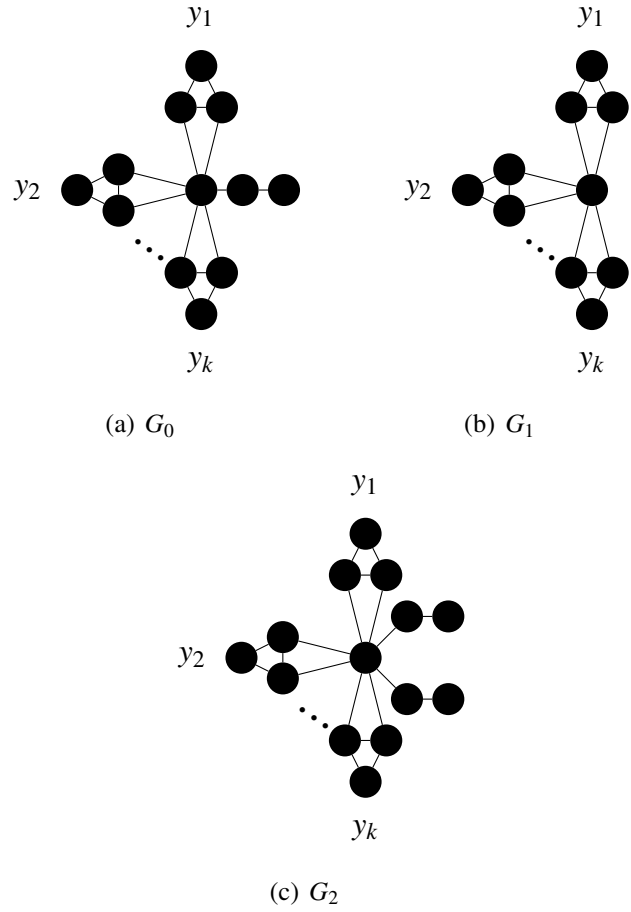
\begin{figure*}[h]
\def\c{1}
\def\r{0.55}
\centering
\subfigure[$G_0$]{
\scalebox{\c}{
\begin{tikzpicture}

\begin{scope}[every node/.style={circle,thick,draw,fill}]
    \node (11) at (1*\r,0*\r) {};
    \node (12) at (2*\r,0*\r) {};
    
   \node (10) at (0.5*\r,-2*\r) {};
    \node[label=below:$y_{k}$] (7) at (0*\r,-3*\r) {};
     \node (6) at (-0.5*\r,-2*\r) {}; 
      \node (9) at (-2*\r,0.5*\r) {};
     \node[label=left:$y_2$] (5) at (-3*\r,0*\r) {};
     \node (4) at (-2*\r,-0.5*\r) {};   
    
    \node[label=above:$y_1$](3) at (-0*\r,3*\r) {};
    \node (8) at (0.5*\r,2*\r) {};
    \node (2) at (-0.5*\r,2*\r) {};
    \node (1) at (0*\r,0*\r) {};
\end{scope}

\begin{scope}
    \path [-] (7) edge node {} (6);
    \path [-] (1) edge node {} (6);
    
    \path [-] (1) edge node {} (4);
    \path [-] (4) edge node {} (5);
    
    \path [-] (1) edge node {} (2);
    \path [-] (2) edge node {} (3);
    
    \path [-] (8) edge node {} (2);
    \path [-] (8) edge node {} (1);
    \path [-] (8) edge node {} (3);
    \path [-] (9) edge node {} (5);
    \path [-] (9) edge node {} (4);
    \path [-] (9) edge node {} (1);
    \path [-] (10) edge node {} (1);
    \path [-] (10) edge node {} (7);
    \path [-] (10) edge node {} (6);
    
    \path [-] (11) edge node {} (1);
    \path [-] (11) edge node {} (12);
\end{scope}

\path (4) -- node[auto=false]{\textbf{$\ddots$}} (6);

\end{tikzpicture}}}
\qquad
\subfigure[$G_1$]{
\scalebox{\c}{
\begin{tikzpicture}

\begin{scope}[every node/.style={circle,thick,draw,fill}]

   \node (10) at (0.5*\r,-2*\r) {};
    \node[label=below:$y_{k}$] (7) at (0*\r,-3*\r) {};
     \node (6) at (-0.5*\r,-2*\r) {}; 
      \node (9) at (-2*\r,0.5*\r) {};
     \node[label=left:$y_2$] (5) at (-3*\r,0*\r) {};
     \node (4) at (-2*\r,-0.5*\r) {};   
    
    \node[label=above:$y_1$](3) at (-0*\r,3*\r) {};
    \node (8) at (0.5*\r,2*\r) {};
    \node (2) at (-0.5*\r,2*\r) {};
    \node (1) at (0*\r,0*\r) {};
\end{scope}

\begin{scope}
    \path [-] (7) edge node {} (6);
    \path [-] (1) edge node {} (6);
    
    \path [-] (1) edge node {} (4);
    \path [-] (4) edge node {} (5);
    
    \path [-] (1) edge node {} (2);
    \path [-] (2) edge node {} (3);
    
    \path [-] (8) edge node {} (2);
    \path [-] (8) edge node {} (1);
    \path [-] (8) edge node {} (3);
    \path [-] (9) edge node {} (5);
    \path [-] (9) edge node {} (4);
    \path [-] (9) edge node {} (1);
    \path [-] (10) edge node {} (1);
    \path [-] (10) edge node {} (7);
    \path [-] (10) edge node {} (6);

\end{scope}

\path (4) -- node[auto=false]{\textbf{$\ddots$}} (6);
\end{tikzpicture}}}
\qquad
\subfigure[$G_2$]{
\scalebox{\c}{
\begin{tikzpicture}

\begin{scope}[every node/.style={circle,thick,draw,fill}]

    \node (12) at (1*\r,1*\r) {};
    \node (11) at (2*\r,1*\r) {};
    \node (13) at (1*\r,-1*\r) {};
    \node (14) at (2*\r,-1*\r) {};

   \node (10) at (0.5*\r,-2*\r) {};
    \node[label=below:$y_{k}$] (7) at (0*\r,-3*\r) {};
     \node (6) at (-0.5*\r,-2*\r) {}; 
      \node (9) at (-2*\r,0.5*\r) {};
     \node[label=left:$y_2$] (5) at (-3*\r,0*\r) {};
     \node (4) at (-2*\r,-0.5*\r) {};   
    
    \node[label=above:$y_1$](3) at (-0*\r,3*\r) {};
    \node (8) at (0.5*\r,2*\r) {};
    \node (2) at (-0.5*\r,2*\r) {};
    \node (1) at (0*\r,0*\r) {};
\end{scope}

\begin{scope}
    \path [-] (7) edge node {} (6);
    \path [-] (1) edge node {} (6);
    
    \path [-] (1) edge node {} (4);
    \path [-] (4) edge node {} (5);
    
    \path [-] (1) edge node {} (2);
    \path [-] (2) edge node {} (3);
    
    \path [-] (8) edge node {} (2);
    \path [-] (8) edge node {} (1);
    \path [-] (8) edge node {} (3);
    \path [-] (9) edge node {} (5);
    \path [-] (9) edge node {} (4);
    \path [-] (9) edge node {} (1);
    \path [-] (10) edge node {} (1);
    \path [-] (10) edge node {} (7);
    \path [-] (10) edge node {} (6);
    
    \path [-] (1) edge node {} (12);
    \path [-] (1) edge node {} (13);
    \path [-] (11) edge node {} (12);
    \path [-] (13) edge node {} (14);

\end{scope}

\path (4) -- node[auto=false]{\textbf{$\ddots$}} (6);
\end{tikzpicture}}}
\caption{Graphs with independence roots of large moduli.}%
\label{fig:conjecturedgraphs}%
\end{figure*}

\begin{proposition}\label{prop:graphroots}
For all $n\ge 1$,
\[ \maxmod{n}\ge \left\{
\begin{array}{ll}
      3^{\frac{n-3}{3}} &~~~~\textnormal{if } n\equiv 0 \mod 3 \\
      & \\
      3^{\frac{n-1}{3}} &~~~~\textnormal{if } n\equiv 1 \mod 3 \\
      &\\
      3^{\frac{n-2}{3}} &~~~~\textnormal{if } n\equiv 2 \mod 3 \\
\end{array} 
\right.. \]
\end{proposition}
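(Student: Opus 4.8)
The plan is to exhibit, for each residue of $n$ modulo $3$, a graph on $n$ vertices carrying an independence root of the stated modulus; the key case is $n\equiv 1\pmod 3$, handled by the family $G_1$ of Figure~\ref{fig:conjecturedgraphs}. For $k\ge 1$ let $G_1$ be the graph consisting of a hub vertex $v$ together with $k$ vertex-disjoint triangles, exactly two vertices of each triangle being adjacent to $v$, so that $G_1$ has $n=3k+1$ vertices. Applying Proposition~\ref{prop:deletion}(i) at $v$, together with Proposition~\ref{prop:deletion}(ii), gives $i(G_1-v,x)=(1+3x)^k$ (a disjoint union of $k$ triangles) and $i(G_1-N[v],x)=(1+x)^k$ (deleting $v$ and its two neighbours from each triangle leaves one isolated vertex per triangle), hence
$$i(G_1,x)=(1+3x)^k+x(1+x)^k.$$

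I would then locate a real root of $f(x):=i(G_1,x)$ with $|x|\ge 3^k$ by a sign change on $(-\infty,-3^k]$. Evaluating,
$$f(-3^k)=(1-3^{k+1})^k-3^k(1-3^k)^k=(-1)^k\Big[(3^{k+1}-1)^k-3^k(3^k-1)^k\Big],$$
and since $3^k(3^k-1)^k=(3^{k+1}-3)^k<(3^{k+1}-1)^k$ the bracket is positive, so $f(-3^k)$ has sign $(-1)^k$ and in particular is nonzero. On the other hand $f$ has leading term $x^{k+1}$, so $f(x)\to(-1)^{k+1}\infty$ as $x\to-\infty$; as these signs are opposite, the Intermediate Value Theorem yields a root $x_0<-3^k$. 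Thus $\maxmod{3k+1}\ge|x_0|>3^k=3^{\frac{(3k+1)-1}{3}}$, and the case $k=0$ (that is, $n=1$) holds since $i(K_1,x)=1+x$ has root $-1$. This proves the proposition for $n\equiv 1\pmod 3$.

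For the other two residues I would invoke monotonicity of $\maxmod{\cdot}$: if $H$ has $m$ vertices and an independence root $z$, then $H\cup K_1$ has $m+1$ vertices and independence polynomial $i(H,x)(1+x)$ by Proposition~\ref{prop:deletion}(ii), which still has $z$ as a root; hence $\maxmod{n}\ge\maxmod{n-1}$ for all $n\ge 2$. When $n\equiv 2\pmod 3$, $n-1$ is $\equiv 1\pmod 3$ and at least $1$, so $\maxmod{n}\ge\maxmod{n-1}\ge 3^{\frac{(n-1)-1}{3}}=3^{\frac{n-2}{3}}$; when $n\equiv 0\pmod 3$, similarly $\maxmod{n}\ge\maxmod{n-2}\ge 3^{\frac{(n-2)-1}{3}}=3^{\frac{n-3}{3}}$. (One can avoid the monotonicity step by using instead the connected graphs $G_0$ and $G_2$ of Figure~\ref{fig:conjecturedgraphs}, obtained from $G_1$ by attaching one, respectively two, pendant paths on two vertices at the hub; the same sign-change analysis applied to their independence polynomials $(1+3x)^k(1+2x)+x(1+x)^{k+1}$ and $(1+3x)^k(1+2x)^2+x(1+x)^{k+2}$ again produces a real root of the required modulus.)

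I expect no serious obstacle: the computation of $i(G_1,x)$ is a direct application of Proposition~\ref{prop:deletion}, and the only inequality needed, $(3^{k+1}-1)^k>(3^{k+1}-3)^k$, is elementary. The points requiring a little care are the bookkeeping of the parity $(-1)^k$ in the sign argument and checking that $f(-3^k)\ne 0$ so that the inequality $|x_0|>3^k$ is strict, both of which are handled by the strict inequality above.
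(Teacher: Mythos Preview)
Your argument is correct. The treatment of the case $n\equiv 1\pmod 3$ via $G_1$ is identical to the paper's: the same computation of $i(G_1,x)=(1+3x)^k+x(1+x)^k$, the same evaluation at $-3^k$, and the same sign-change argument via the Intermediate Value Theorem.

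Where you diverge is in the remaining two residue classes. The paper handles $n\equiv 0$ and $n\equiv 2\pmod 3$ by constructing the connected graphs $G_0$ and $G_2$ (attaching one or two pendant $P_2$'s at the hub of $G_1$) and repeating the IVT analysis for each, which entails somewhat longer sign computations---particularly for $G_2$, where an algebraic rearrangement is needed to see that $i(G_2,-3^k)$ has sign $(-1)^k$. Your route instead exploits the trivial monotonicity $\maxmod{n}\ge\maxmod{n-1}$ (witnessed by $H\cup K_1$) to reduce both cases to the one already established. This is shorter and avoids all of the $G_0,G_2$ bookkeeping; what it sacrifices is connectedness of the witness graphs, which the paper's constructions retain. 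Since $\maxmod{n}$ ranges over all graphs, not just connected ones, your reduction is entirely legitimate for the proposition as stated, and you correctly note the $G_0,G_2$ alternative in your closing remark.
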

\begin{proof}
The proof is in three cases depending on $n\ \textnormal{mod}\ 3$. Each relies on independence polynomials of the graphs $G_0,G_1$, and $G_2$, respectively, in Figure~\ref{fig:conjecturedgraphs} where $G_1$ is obtained by joining a central vertex to all but one vertex in each of $k$ copies of $K_3$, $G_0$ is obtained by joining one vertex in $K_2$ to the central vertex in $G_1$, and $G_2$ is obtained by joining one vertex in another copy of $K_2$ to the central vertex in $G_0$. Note that the orders of $G_0$, $G_1$, and $G_2$ are congruent to $0$, $1$, and $2$, respectively, $\textnormal{mod } 3$. We then use the Intermediate Value Theorem (IVT) to find that each has a real root of large modulus. 

From Proposition~\ref{prop:deletion}, it easily follows that
\begin{align*}
i(G_0,x)&=(1+3x)^k(1+2x)+x(1+x)^{k+1}\\
i(G_1,x)&=(1+3x)^k+x(1+x)^{k}\\
i(G_2,x)&=(1+3x)^k(1+2x)^2+x(1+x)^{k+2}.
\end{align*}

It is now straightforward to determine that 
\begin{align*}
\text{sign}\left(\lim_{x\to-\infty}i(G_0,x)\right)&=(-1)^{k}\\
\text{sign}\left(\lim_{x\to-\infty}i(G_1,x)\right)&=(-1)^{k+1}\\
\text{sign}\left(\lim_{x\to-\infty}i(G_2,x)\right)&=(-1)^{k+1}.
\end{align*}

We now prove the lower bounds for $\maxmod{n}$ by exhibiting, in each one of the cases, a graph with a real independence root with modulus larger than the bound.\\

\noindent\textbf{Case 0:} $n\equiv 0\mod 3$\\

If $n=3$, then we can use the quadratic formula to find that $P_3$ has a real independence root with modulus approximately $2.618> 1$. So we may assume that $n\ge 6$ and thus $k\ge 1$ for our analysis of $G_0$. For all $k\ge1$, we have that
\begin{align*}
i(G_0,-3^k)&=\left(1-3^{k+1}\right)^k\left(1-2\cdot 3^{k+1}\right)-3^k\left(1-3^{k}\right)^k\\
&=(-1)^k\left[(1-3^k)\left((3^{k+1}-1)^k-(3^{k+1}-3)^k\right)-3^k(3^{k}-1)^k\right]
\end{align*}

\noindent which has the same sign as $(-1)^{k+1}$ since $0>(3^{k+1}-1)^k-(3^{k+1}-3)^k.$ Thus, $i(G_0,x)$ alternates sign on $(-\infty,-3^k]$ and by the IVT and since $k=\frac{n-3}{3}$, $i(G_0,x)$ has a root in the interval $(-\infty,-3^{\frac{n-3}{3}})$.\\

\noindent\textbf{Case 1:} $n\equiv 1\mod 3$\\

If $n=1$, then $K_1$ is the only graph to consider and the result clearly holds. So we may assume that $n\ge 4$ and therefore $k\ge 1$ for our analysis of $G_1$. Since $i(G_1,-3^k)=(-1)^k((3^{k+1}-1)^k-(3^{k+1}-3)^k)$, it follows that $i(G_1,-3^k)$ has the same sign as $(-1)^k$. Thus $i(G_1,x)$ alternates sign on $(-\infty,-3^k]$ and by IVT it must have a root in the interval $(-\infty,-3^{\frac{n-1}{3}})$.\\

\noindent\textbf{Case 2:} $n\equiv 2\mod 3$\\

If $n=2$, then the graph $\overline{K_2}$ has and $-1$ as an independence root and $|-1|=1=3^{0}$. If $n=5$, then $P_5$ has a real independence root of modulus approximately $5.0489173$ which is greater than $3$. So we may assume $n\ge 8$ and therefore $k\ge 1$ for the our analysis of the graph $G_2$. We now have,

\begin{align*}
i(G_2,-3^k)&=(-1)^k\left[(1-2\cdot 3^{k})^2(3^{k+1}-1)^k-3^k(1-3^k)^2(3^k-1)^k\right]\\
&=(-1)^k\left[(1-4\cdot 3^{k}+4\cdot 3^{2k})(3^{k+1}-1)^k-(1-3^k)^2(3^{k+1}-3)^k\right]\\
&=(-1)^k\left[(1-4\cdot 3^{k}+3^{2k}+3^{2k+1})(3^{k+1}-1)^k-(1-3^k)^2(3^{k+1}-3)^k\right]\\
&=(-1)^k\left[(1-3^k)^2\left((3^{k+1}-1)^k-(3^{k+1}-3)^k\right)+\right.\\
 & \left. ~~~~~~~~~~~~~~~~(3^{2k+1}-2\cdot 3^{k})(3^{k+1}-1)^k\right]  
\end{align*}
\noindent which has sign $(-1)^k$ since $(3^{k+1}-1)^k-(3^{k+1}-3)^k>0$, $(1-3^k)^2>0$, and $(3^{2k+1}-2\cdot 3^{k})(3^{k+1}-1)^k>0$.

Therefore, IVT gives that $i(G_2,x)$ must have a root in the interval $(-\infty,-3^{\frac{n-2}{3}})$. This completes the proof.
\end{proof}

Therefore, $\maxmod{n}$ is at least exponential in $n$. We require the next two lemmas to put an upper bound on $\maxmod{n}$.

\begin{lemma}\label{lem:indnum}
For all graphs $G$ with at least one edge, there exists a non-isolated vertex $v$ such that $\alpha(G)=\alpha(G-v)\ge \alpha(G-N[v])+1$.
\end{lemma}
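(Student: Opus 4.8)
The plan is to prove the statement by finding a vertex $v$ that is contained in \emph{some} maximum independent set; this simultaneously forces $\alpha(G-v)=\alpha(G)$ (deleting $v$ cannot increase the independence number, and a maximum independent set avoiding $v$ certainly exists unless $v$ lies in every maximum independent set, which I'll handle below) and, crucially, lets me control $\alpha(G-N[v])$.

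First I would pick a maximum independent set $A$ of $G$ and choose $v\in A$ to be a vertex that is \emph{not} isolated; I need to argue such a $v$ exists. Since $G$ has at least one edge, write $e=xy$. If neither $x$ nor $y$ lies in $A$, then at least one of them, say $x$, has a neighbour in $A$ (otherwise $A\cup\{x\}$ would be independent, contradicting maximality of $|A|$), and that neighbour is a non-isolated vertex of $A$; take it to be $v$. If $A$ already contains a non-isolated vertex, take that to be $v$. The only remaining worry is that every vertex of $A$ is isolated — but then every neighbour-free argument shows $A$ must contain \emph{every} vertex, forcing $G$ to be edgeless, a contradiction. So a non-isolated $v\in A$ always exists.

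Next, with such a $v$ fixed, I would verify the two inequalities. Since $A\setminus\{v\}$ is an independent set in $G-v$ of size $\alpha(G)-1$, we get $\alpha(G-v)\ge \alpha(G)-1$, but I actually need equality $\alpha(G-v)=\alpha(G)$; this follows because $v$ need not be in every maximum independent set once $G$ has an edge — more carefully, I should instead choose $v$ so that there is a maximum independent set \emph{not} containing $v$, which is automatic if $G$ is disconnected or if $\alpha(G)\ge 2$, and for the degenerate small cases ($\alpha(G)=1$, i.e. $G$ complete) one checks directly that deleting any vertex keeps $\alpha=1$. For the second inequality, note $N[v]\supseteq\{v\}$ and $A\cap N[v]=\{v\}$ (as $A$ is independent), so $A\setminus\{v\}=A\setminus N[v]$ is an independent set in $G-N[v]$ of size $\alpha(G)-1=\alpha(G-v)-1$, giving $\alpha(G-N[v])\le \alpha(G-v)-1$, i.e. $\alpha(G-v)\ge\alpha(G-N[v])+1$, as required.

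The main obstacle is the bookkeeping around the equality $\alpha(G)=\alpha(G-v)$: one must select the non-isolated vertex $v$ inside a maximum independent set \emph{and} ensure that removing it does not drop the independence number, which requires either $v$ to be avoidable by some maximum independent set or a short separate treatment of the complete-graph case. I expect the cleanest route is to handle $G$ with a component that is not complete (pick $v$ non-isolated in a maximum independent set of a non-complete component, where $\alpha$ of that component is $\ge 2$ so an alternative maximum set exists) and the all-components-complete case separately, where one takes $v$ to be any vertex on an edge and checks the three conditions by hand.
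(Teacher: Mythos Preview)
Your argument for the inequality $\alpha(G-v)\ge\alpha(G-N[v])+1$ has the direction reversed. You exhibit $A\setminus\{v\}$ as an independent set of size $\alpha(G)-1$ in $G-N[v]$; this only shows $\alpha(G-N[v])\ge\alpha(G)-1$, not $\alpha(G-N[v])\le\alpha(G)-1$. The correct argument (which the paper gives in one line, and which works for \emph{every} vertex $v$, not just one lying in a maximum independent set) runs the other way: take a maximum independent set $B$ of $G-N[v]$; since every neighbour of $v$ has been deleted, $B\cup\{v\}$ is independent in $G$, whence $\alpha(G)\ge|B|+1=\alpha(G-N[v])+1$.

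Once you see that the second inequality is automatic for every $v$, the entire strategy of placing $v$ \emph{inside} a maximum independent set $A$ becomes unnecessary and in fact counterproductive: for the equality $\alpha(G)=\alpha(G-v)$ you need a maximum independent set that \emph{avoids} $v$, the opposite condition, and this is what forces you into the awkward case analysis on complete components. The paper's route is much shorter: if no such $v$ existed, then $\alpha(G-v)<\alpha(G)$ for every vertex, meaning every vertex lies in every maximum independent set --- impossible once $G$ has an edge. Any $v$ witnessing $\alpha(G-v)=\alpha(G)$ is automatically non-isolated, since an isolated vertex lies in every maximum independent set.
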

\begin{proof}
Let $G$ be a graph with at least one edge. It is clear that for any vertex $v$ of $G$, $\alpha(G)\ge \alpha(G-N[v])+1$, since any maximum independent set in $G-N[v]$ will still be independent in $G$ with the addition of $v$. Suppose that for all vertices $v\in V(G)$, that $\alpha(G)>\alpha(G-v)$. Then every vertex belongs to every maximum independent set. However, $G$ has at least one edge, so the vertices incident with this edge cannot belong to the same independent set, which contradicts both of these vertices being in every maximum independent set. Therefore, there exists some $v\in V(G)$ incident with some edge such that $$\alpha(G)=\alpha(G-v)\ge \alpha(G-N[v])+1.$$
\end{proof}

\begin{lemma}\label{lem:1maxindset}
If $G$ is a graph on $n$ vertices such that $\xi(G)=1$, then $i_{\alpha(G)-1}\le 3^{\frac{n}{3}}+n-1$.
\end{lemma}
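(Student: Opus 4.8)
The plan is to bound $i_{\alpha(G)-1}$ by counting the independent sets of size $\alpha(G)-1$ according to whether or not they extend to the (unique) maximum independent set. Let $\alpha = \alpha(G)$ and let $A$ be the unique maximum independent set of $G$ (which exists since $\xi(G)=1$). Every independent set $S$ of size $\alpha-1$ is either a subset of $A$ or it is not. There are exactly $\alpha$ subsets of $A$ of size $\alpha-1$, and since $|A|=|V(G)\cap A|\le n$, actually $\alpha\le n$, so this accounts for at most $n$ of the sets (and more carefully at most $\alpha \le n-1$ when $G$ has an edge, which it must since $\xi(G)=1$ forces... well, one should check the trivial cases $\alpha=0,1$ separately, but they are easy). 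So the work is in bounding the number of independent sets $S$ of size $\alpha-1$ with $S\not\subseteq A$.

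For such an $S$, the plan is to show $S$ is contained in a maximal independent set $M\ne A$, and to control how many size-$(\alpha-1)$ sets can arise this way. First I would observe that any independent set of size $\alpha-1$ not contained in $A$ must itself be maximal: if $S$ (with $|S|=\alpha-1$) could be extended by a vertex $v$ to an independent set of size $\alpha$, that set would have to be $A$ (uniqueness), forcing $S\subseteq A$, contradiction. Hence every size-$(\alpha-1)$ independent set not inside $A$ is a maximal independent set of $G$. The number of maximal independent sets is at most $\mu(G)\le 3^{n/3}$ by Theorem~\ref{thm:maxsetsgraph}, but $A$ itself is one of them and is not counted here, so these contribute at most $3^{n/3}-1$. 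Combining, $i_{\alpha-1}\le (3^{n/3}-1) + n = 3^{n/3}+n-1$, with the subsets-of-$A$ count giving at most $n$ (indeed $\alpha\le n$, and one can afford the slack).

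The main obstacle I anticipate is handling the degenerate cases cleanly: if $\alpha(G)=0$ then $G$ has no vertices or the empty set is the only independent set and $i_{-1}$ is vacuous; if $\alpha(G)=1$ then $i_{\alpha-1}=i_0=1\le 3^{n/3}+n-1$ trivially; and if $G$ has no edges then $\xi(G)=1$ forces $n\le 1$. So one should dispatch these first and then assume $\alpha\ge 2$ and $G$ has an edge, where the counting argument above applies. A secondary subtlety is making sure the "$n$" term is correct: the number of $(\alpha-1)$-subsets of $A$ is $\alpha$, and since $A\subseteq V(G)$ we have $\alpha\le n$; if one wants the sharper count one notes that when $\xi(G)=1$ and $G$ has an edge then $\alpha\le n-1$, but $\alpha\le n$ already suffices against the stated bound. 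Everything else is a direct assembly: identify the unique maximum independent set, split size-$(\alpha-1)$ independent sets into those inside $A$ (at most $n$) and those outside (maximal, hence at most $\mu(G)-1\le 3^{n/3}-1$), and add.
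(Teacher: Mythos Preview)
Your proposal is correct and is essentially the paper's own argument: split the independent sets of size $\alpha-1$ into subsets of the unique maximum independent set $A$ (at most $\alpha\le n$ of these) and the rest, observe that the latter must be maximal (else they extend to $A$), and bound their number by $\mu(G)-1\le 3^{n/3}-1$. The paper states this in one line as $i_{\alpha(G)-1}\le \mu(G)-1+\alpha(G)\le 3^{n/3}+n-1$; your additional care with the degenerate cases $\alpha\le 1$ is welcome but not strictly needed for the bound.
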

\begin{proof}
Let $G$ be a graph on $n$ vertices such that $\xi(G)=1$. Every independent set of size $\alpha(G)-1$ is either maximal or is a subset of the one independent set of size $\alpha(G)$. Therefore, $i_{\alpha(G)-1}\le \mu(G)-1+\alpha(G)\le 3^{\frac{n}{3}}+n-1$ (subtracting $1$ from $\mu(G)$ to account for the one maximum independent set) by the note following Theorem~\ref{thm:maxsetsgraph}.
\end{proof}

\begin{theorem}\label{thm:graphrootbound}
For all $n\ge 1$, $\maxmod{n}\le 3^{\frac{n}{3}}+n-1$. 
\end{theorem}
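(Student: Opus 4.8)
The plan is to use the Eneström–Kakeya Theorem (Theorem~\ref{thm:EK}): for a polynomial with positive real coefficients, every root lies in $|z|\le \max_i a_i/a_{i+1}$. Since $i(G,x)$ has nonnegative integer coefficients and $i_0=1$, the only obstruction to applying this directly is the possible presence of zero coefficients; but if $i(G,x)=p(x)$ with $p(0)\neq 0$ and all coefficients nonnegative, then in fact the coefficients of the independence polynomial are positive up to degree $\alpha(G)$ (every subset of an independent set is independent, so $i_k\ge 1$ for $0\le k\le \alpha(G)$), so Theorem~\ref{thm:EK} applies to $f(x)=i(G,x)$ with $n$ replaced by $\alpha=\alpha(G)$. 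Hence $\maxmod{n}$ is bounded by $\max\{i_i/i_{i+1} : 0\le i\le \alpha-1\}$ over all graphs $G$ on $n$ vertices, and it suffices to bound each ratio $i_i/i_{i+1}$ by $3^{n/3}+n-1$.

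First I would handle the generic ratios. For $0\le i\le \alpha-2$, I want to show $i_i\le (i+1)\,i_{i+1}$, or more simply $i_i/i_{i+1}$ is not the maximum; the cleanest route is the standard injection/counting bound showing $i_i / i_{i+1}$ is controlled. Actually the key point is that the maximum of the ratios is achieved at the top: I would argue that $i_{i}/i_{i+1} \le i_{\alpha-1}/i_{\alpha}$ fails in general, so instead I bound $i_i/i_{i+1}$ crudely. Since $i_{i+1}\ge 1$ for $i+1\le\alpha$, it suffices to bound $i_i$ itself: for $i\le \alpha-2$ one has $i_i \le \binom{n}{i}$, which is far smaller than $3^{n/3}$ only for small $i$ — so this crude bound is not enough for all $i$, and the genuine work is to compare consecutive coefficients.

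So the main step, and the main obstacle, is to prove $i_i \le (3^{n/3}+n-1)\, i_{i+1}$ for every $0\le i\le\alpha-1$. For $i=\alpha-1$ this is exactly Lemma~\ref{lem:1maxindset} when $\xi(G)=i_\alpha=1$, and when $\xi(G)\ge 2$ it is even easier since $i_{\alpha-1}\le \mu(G)-1+\alpha\le 3^{n/3}+n-1$ while dividing by $i_\alpha\ge 2$ only helps. For $i<\alpha-1$, I would set up an induction on $n$ using Lemma~\ref{lem:indnum}: pick a non-isolated vertex $v$ with $\alpha(G)=\alpha(G-v)$ and $\alpha(G-N[v])\le\alpha(G)-1$, and use Proposition~\ref{prop:deletion}(i), $i(G,x)=i(G-v,x)+x\,i(G-N[v],x)$, to write $i_k^G=i_k^{G-v}+i_{k-1}^{G-N[v]}$. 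The choice of $v$ guarantees the degree of $i(G-v,x)$ still equals $\alpha$ while $i(G-N[v],x)$ has degree $\le\alpha-1$, so the leading behaviour is governed by $G-v$; applying the inductive hypothesis to $G-v$ (on $n-1$ vertices) and bounding the extra terms $i_{k-1}^{G-N[v]}$ via Theorem~\ref{thm:maxsetsgraph}, together with the identity $i_{k-1}^{G-N[v]}\ge 1$ only in the relevant range, should close the induction. The delicate point will be controlling the cross terms so the bound does not degrade from $3^{n/3}$ to something larger as $n$ grows — this is where the precise form $g(n)$ in Theorem~\ref{thm:maxsetsgraph} and the slack in the additive $n-1$ term must be used carefully.
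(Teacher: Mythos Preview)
Your overall architecture matches the paper's: bound every consecutive ratio $i_k^G/i_{k+1}^G$ by $3^{n/3}+n-1$ and invoke Enestr\"om--Kakeya, proving the ratio bound by induction on $n$ via the deletion formula at a vertex $v$ chosen as in Lemma~\ref{lem:indnum}. But two points in your execution are genuine gaps.

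First, your direct treatment of the top ratio contains an error. The inequality $i_{\alpha-1}\le \mu(G)-1+\alpha$ is \emph{not} valid in general when $\xi(G)\ge 2$: for $G=3K_2$ one has $\alpha=3$, $\mu(G)=8$, but $i_{2}=12>10=\mu(G)-1+\alpha$. What is true is $i_{\alpha-1}\le (\mu(G)-\xi(G))+\alpha\,\xi(G)$, whence $i_{\alpha-1}/\xi(G)\le \mu(G)/\xi(G)-1+\alpha\le \mu(G)-1+\alpha$; so the ratio bound survives, but not by the argument you wrote. More importantly, separating the top ratio from the induction is structurally awkward: to bound the middle ratios of $G$ you need the inductive hypothesis for \emph{all} ratios of $G-v$ and $G-N[v]$, including their top ratios, so the statement you induct on must be the full ratio bound.

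Second, and this is the main gap, your inductive sketch proposes to apply the hypothesis only to $G-v$ and to control the $i_{k-1}^{G-N[v]}$ terms via Moon--Moser. That does not close. The paper instead applies the inductive hypothesis to \emph{both} $G-v$ and $G-N[v]$: when $\alpha(G-v)=\alpha(G-N[v])+1$ the ratio $(i_k^{G-v}+i_{k-1}^{G-N[v]})/(i_{k+1}^{G-v}+i_k^{G-N[v]})$ is a weighted average of two ratios each already bounded by $3^{(n-1)/3}+n-2$. The genuinely delicate case is $\alpha(G-v)>\alpha(G-N[v])+1$, where one must bound the hybrid ratio $(i_{\alpha(G-N[v])+1}^{G-v}+\xi(G-N[v]))/i_{\alpha(G-N[v])+2}^{G-v}$. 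The paper splits this into subcases according to whether $\alpha(G-N[v])+2<\alpha(G)$, and according to $|N[v]|$; the Moon--Moser bound on $\xi(G-N[v])$ enters here, and in the tightest subcase ($|N[v]|=3$, $\xi(G-v)=1$) one is forced back to Lemma~\ref{lem:1maxindset} applied to $G$ itself. Your proposal identifies that ``cross terms'' are the issue but does not supply the case analysis that actually resolves them.
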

\begin{proof}
We actually prove the stronger result that for a graph on $n$ vertices, the ratios of coefficients, given by $R$ in the statement of Theorem~\ref{thm:EK} (the Enestr\"{o}m-Kakeya Theorem), of its independence polynomial are bounded above by $3^{\frac{n}{3}}+n-1$. It then follows directly from the Enestr\"{o}m-Kakeya Theorem that the roots are bounded by this value. We proceed by induction on $n$. 

The results hold for graphs on $n\le 5$ vertices by straightforward checking the ratios of consecutive coefficients of the independence polynomials of all $52$ graphs in Maple. Now suppose the result holds for all $3\le k<n$, and let $G$ be a graph on $n$ vertices. If $G$ has no edges, then we are done, since $G$ has only $-1$ as an independence root in this case. Therefore, suppose $G$ has at least one edge. Let $v$ be a nonisolated vertex in $G$ such that $\alpha(G)=\alpha(G-v)\ge \alpha(G-N[v])+1$, noting that $v$ exists by Lemma~\ref{lem:indnum}.  Now, by Proposition~\ref{prop:deletion},

\begin{align*}
i(G,x)&=i(G-v,x)+x\cdot i(G-N[v],x)\\
&=\sum_{k=0}^{\alpha(G-v)}i_{k}^{G-v}x^k+x\sum_{k=0}^{\alpha(G-N[v])}i_{k}^{G-N[v]}x^k\\
&=1+\sum_{k=1}^{\alpha(G-v)}i_{k}^{G-v}x^k+\sum_{k=1}^{\alpha(G-N[v])+1}i_{k-1}^{G-N[v]}x^{k}. \tag{1}\label{eq:1}
\end{align*}

\noindent We now have two cases.\\

\noindent\textbf{Case 1:} $\alpha(G)=\alpha(G-v)=\alpha(G-N[v])+1$.\\

In this case, (\ref{eq:1}) gives
$$i(G,x)=1+\sum_{k=1}^{\alpha(G-v)}\left(i_{k}^{G-v}+i_{k-1}^{G-N[v]}\right)x^{k}.$$

This gives the following ratios between coefficients,

$$
\frac{1}{n}\text{ and }\frac{i_{k}^{G-v}+i_{k-1}^{G-N[v]}}{i_{k+1}^{G-v}+i_{k}^{G-N[v]}}\text{ for $k=1,2,\ldots,\alpha(G-N[v])$}.
$$

For all $n\ge 1$, $\frac{1}{n}<3^{\frac{n}{3}}+n-1$, and by the inductive hypothesis,

\begin{align*}
\frac{i_{k}^{G-v}+i_{k-1}^{G-N[v]}}{i_{k+1}^{G-v}+i_{k}^{G-N[v]}}&< \frac{\left(3^{\frac{n-1}{3}}+n-2\right)i_{k+1}^{G-v}+\left(3^{\frac{n-|N[v]|}{3}}+n-1-|N[v]|\right)i_{k}^{G-N[v]}}{i_{k+1}^{G-v}+i_{k}^{G-N[v]}} \\
&\le \frac{\left(3^{\frac{n-1}{3}}+n-2\right)\left(i_{k+1}^{G-v}+i_{k}^{G-N[v]}\right)}{i_{k+1}^{G-v}+i_{k}^{G-N[v]}}\\
&=3^{\frac{n-1}{3}}+n-2\\
&<3^{\frac{n}{3}}+n-1. 
\end{align*}

\noindent\textbf{Case 2:} $\alpha(G)=\alpha(G-v)>\alpha(G-N[v])+1$.\\

In this case, the independence polynomial is obtained from (\ref{eq:1}) as,
In this case, (\ref{eq:1}) gives
$$i(G,x)=1+\sum_{k=1}^{\alpha(G-N[v])+1}\left(i_{k}^{G-v}+i_{k-1}^{G-N[v]}\right)x^{k}+\sum_{\alpha(G-N[v])+2}^{\alpha(G-v)}i_k^{G-v}x^k.$$

This gives four different forms for $\frac{i_k^G}{i_{k+1}^G}$. The first two, namely $\frac{1}{n}$ and $\frac{i_{k}^{G-v}+i_{k-1}^{G-N[v]}}{i_{k+1}^{G-v}+i_{k}^{G-N[v]}}$, are less than or equal to $3^{\frac{n}{3}}+n-1$ for each $k=1,2,\ldots,\alpha(G-N[v])$ by the same argument as Case 1. This leaves,

$$\frac{i_{\alpha(G-N[v])+1}^{G-v}+i_{\alpha(G-N[v])}^{G-N[v]}}{i_{\alpha(G-N[v])+2}^{G-v}},\text{ and } \frac{i_k^{G-v}}{i_{k+1}^{G-v}}\text{ for $k\ge \alpha(G-N[v])+2$}
$$

\noindent By the inductive hypothesis, $\frac{i_k^{G-v}}{i_{k+1}^{G-v}}\le 3^{\frac{n-1}{3}}+n-2< 3^{\frac{n}{3}}+n-1$, so we are left only with $\frac{i_{\alpha(G-N[v])+1}^{G-v}+i_{\alpha(G-N[v])}^{G-N[v]}}{i_{\alpha(G-N[v])+2}^{G-v}}$.

In this case, we first show that $|N[v]|\ge 3$. As $v$ is not isolated, $|N[v]|\ge  2$. If $|N[v]|=2$, then $v$ is a leaf, and since $v$ was chosen such that $\alpha(G)=\alpha(G-v)\ge \alpha(G-N[v])+1$, $v$ is not in every maximum independent set in $G$. But every maximum independent set in $G$ must contain either $v$ or its neighbour, so $\alpha(G-v)=\alpha(G-N[v])+1$ as covered in Case 1. Therefore, we may assume $|N[v]|\ge 3$. We also note that $$\frac{i_{\alpha(G-N[v])+1}^{G-v}+i_{\alpha(G-N[v])}^{G-N[v]}}{i_{\alpha(G-N[v])+2}^{G-v}}=\frac{i_{\alpha(G-N[v])+1}^{G-v}}{i_{\alpha(G-N[v])+2}^{G-v}}+\frac{\xi(G-N[v])}{i_{\alpha(G-N[v])+2}^{G-v}}.$$
There are three subcases to consider.\\

\noindent\textbf{Case 2a:} $\alpha(G-N[v])+2<\alpha(G-v)=\alpha(G)$.\\

If $\alpha(G-N[v])+2<\alpha(G-v)$, then $G-v$ has an independent set of size $\alpha(G-N[v])+3$. Therefore, $i_{\alpha(G-N[v])+2}^{G-v}\ge \alpha(G-N[v])+3\ge 3$, since any independent set of size  $k$, contains at least $\binom{k}{k-1}=k$ independent sets of size $k-1$. Now by the inductive hypothesis and the note following Theorem~\ref{thm:maxsetsgraph},

\begin{align*}
\frac{i_{\alpha(G-N[v])+1}^{G-v}}{i_{\alpha(G-N[v])+2}^{G-v}}+\frac{\xi(G-N[v])}{i_{\alpha(G-N[v])+2}^{G-v}}&\le 3^{\frac{n-1}{3}}+n-2+\frac{\xi(G-N[v])}{3}\\\
&\le 3^{\frac{n-1}{3}}+n-2+3^{\frac{n-|N[v]|-3}{3}}\\
&\le 3^{\frac{n-1}{3}}+n-2+3^{\frac{n-6}{3}}\\
&=3^{\frac{n}{3}}\left(3^{\frac{-1}{3}}+\frac{1}{9}\right)+n-2\\
&\le 3^{\frac{n}{3}}+n-1.
\end{align*}

\noindent\textbf{Case 2b:} $\alpha(G-N[v])+2=\alpha(G-v)=\alpha(G)$ and $|N[v]|\ge 4$.\\

In this case, by the inductive hypothesis and the note following Theorem~\ref{thm:maxsetsgraph},

\begin{align*}
\frac{i_{\alpha(G-N[v])+1}^{G-v}}{i_{\alpha(G-N[v])+2}^{G-v}}+\frac{\xi(G-N[v])}{i_{\alpha(G-N[v])+2}^{G-v}}&\le 3^{\frac{n-1}{3}}+n-2+\xi(G-N[v])\\\
&\le 3^{\frac{n-1}{3}}+n-2+3^{\frac{n-|N[v]|}{3}} \\
&\le 3^{\frac{n-1}{3}}+n-2+3^{\frac{n-4}{3}}\\
&=3^{\frac{n}{3}}\left(3^{\frac{-1}{3}}+3^{\frac{-4}{3}}\right)+n-2\\
&< 3^{\frac{n}{3}}+n-1.
\end{align*}

\noindent\textbf{Case 2c:} $\alpha(G-N[v])+2=\alpha(G-v)$ and $|N[v]|= 3$.\\

We break this final case into two subcases bases on the size of $i_{\alpha(G-N[v])+2}^{G-v}$. First, if $i_{\alpha(G-N[v])+2}^{G-v}\ge 2$, then by the inductive hypothesis and the note following Theorem~\ref{thm:maxsetsgraph},

\begin{align*}
\frac{i_{\alpha(G-N[v])+1}^{G-v}}{i_{\alpha(G-N[v])+2}^{G-v}}+\frac{\xi(G-N[v])}{i_{\alpha(G-N[v])+2}^{G-v}}&\le 3^{\frac{n-1}{3}}+n-2+\frac{\xi(G-N[v])}{2} \\\
&\le 3^{\frac{n-1}{3}}+n-2+\frac{3^{\frac{n-3}{3}}}{2}\\
&=3^{\frac{n}{3}}\left(3^{\frac{-1}{3}}+\frac{1}{3}\right)+n-2\\
&\le 3^{\frac{n}{3}}+n-1.
\end{align*}

Note if some maximum independent set in $G$ contained $v$, then this set with $v$ removed would be an independent set of size $\alpha(G)-1=\alpha(G-N[v])+1$ in $G-N[v]$, which is a contradiction. Therefore, the maximum independent sets in $G$ and $G-v$ are exactly the same sets and, in particular, $\xi(G)=\xi(G-v)$. Now, if $$1=i_{\alpha(G-N[v])+2}^{G-v}=\xi(G-v)=\xi(G),$$ then Lemma~\ref{lem:1maxindset} applied to $G$ gives a bound on $i_{\alpha(G)-1}^G$ in the last line of the following,

\begin{align*}
\frac{i_{\alpha(G-N[v])+1}^{G-v}+i_{\alpha(G-N[v])}^{G-N[v]}}{i_{\alpha(G-N[v])+2}^{G-v}}&=\frac{i_{\alpha(G)-1}^{G}}{i_{\alpha(G)}^{G}}\\
&=i_{\alpha(G)-1}^{G}\\
&\le 3^{\frac{n}{3}}+n-1.
\end{align*}

Now, if $z$ is an independence root of $G$, then, by the Enestr\"{o}m-Kakeya Theorem, $|z|\le 3^{\frac{n}{3}}+n-1$.
\end{proof}

Proposition~\ref{prop:graphroots} and Theorem~\ref{thm:graphrootbound} give the following corollary.

%$\frac{1}{3}\ln(3)\approx 0.3662040962, \frac{1}{\textnormal{e}}\approx 0.367879441$
\begin{corollary}\label{cor:asymgraphbound}
$$\frac{\log_{3}(\maxmod{n})}{n}=\frac{1}{3}+o(1).$$ 
\QED
\end{corollary}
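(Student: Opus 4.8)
The plan is to obtain this corollary as a routine sandwich (squeeze) argument, combining the lower bound of Proposition~\ref{prop:graphroots} with the upper bound of Theorem~\ref{thm:graphrootbound}, after taking base-$3$ logarithms and dividing through by $n$. No new ideas are needed beyond these two results.

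First I would extract a single lower bound valid in all three residue classes. The three exponents appearing in Proposition~\ref{prop:graphroots}, namely $(n-3)/3$, $(n-1)/3$, and $(n-2)/3$, are each at least $(n-3)/3$, so $\maxmod{n}\ge 3^{(n-3)/3}$ for every $n\ge 1$. Taking $\log_3$ and dividing by $n$ gives $\log_3(\maxmod{n})/n\ge 1/3-1/n$.

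Next I would treat the upper bound. Theorem~\ref{thm:graphrootbound} gives $\maxmod{n}\le 3^{n/3}+n-1$. For all $n$ large enough that $n-1\le 3^{n/3}$, the right-hand side is at most $2\cdot 3^{n/3}$, so $\log_3(\maxmod{n})\le n/3+\log_3 2$ and hence $\log_3(\maxmod{n})/n\le 1/3+(\log_3 2)/n$. Combining this with the previous paragraph, $\log_3(\maxmod{n})/n - 1/3$ lies between $-1/n$ and $(\log_3 2)/n$, so it tends to $0$ as $n\to\infty$, which is exactly the assertion $\log_3(\maxmod{n})/n = 1/3+o(1)$.

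I do not expect any genuine obstacle here. The only point that warrants a word of justification is that the additive linear term $n-1$ inside the logarithm in the upper bound is negligible: it is dominated by $3^{n/3}$ and therefore perturbs the limit by only $O(1/n)$. One could instead phrase everything in terms of $\liminf$ and $\limsup$, but the explicit $O(1/n)$ error bounds make the $o(1)$ statement immediate.
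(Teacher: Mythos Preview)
Your proposal is correct and matches the paper's approach exactly: the paper presents this corollary with no argument beyond noting that it follows from Proposition~\ref{prop:graphroots} and Theorem~\ref{thm:graphrootbound}, and your sandwich argument is precisely the routine verification of that implication. Your explicit $O(1/n)$ bounds on both sides are a perfectly valid way to spell out what the paper leaves implicit.
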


%\begin{table}[h]
%\begin{tabular}{|c|c|c|c|}
%\hline
%$n$ & $3^{\frac{n-r}{3}}$ & $\maxmod{n}$ & $\text{e}^{\frac{n}{\text{e}}}$\\
%\hline
%$1$ & $1$ & $1$ & $1.444667861$\\
%\hline
%$2$ & $1$ & $1$ & $2.087065228$\\
%\hline
%$3$ & $1$ & $2.618033989$ & $3.015116061$\\
%\hline
%$4$ & $3$ & $3.732050808$ & $4.355841269$\\
%\hline
%$5$ & $3$ & $5.04891733952231$ & $6.292743887$\\
%\hline
%$6$ & $3$ & $7.18421012919818$ & $9.090924849$\\
%\hline
%$7$ & $9$ & $10.3318514126666$ & $13.13336695$\\
%\hline
%$8$ & $9$ & $13.6506654518316$ & $18.97335316$\\
%\hline
%$9$ & $9$ & $19.8113767295360$ & $27.41019352$\\
%\hline
%\end{tabular}
%\caption{Comparing $\maxmod{n}$ to the bounds found in Theorem~\ref{thm:graphrootbound} and Proposition~\ref{prop:graphroots}}\label{tab:boundcomp}
%\end{table}

\section{Bounds for trees of order $n$}\label{sec:trees}

%Bounding the independence roots of trees has gained some recent attention with Oboudi \cite{Oboudi2018treeroots} showing that the star graph has the independence root that is smallest in modulus among all trees of on $n$ vertices. 
Now that we have determined bounds on $\maxmod{n}$, a natural extension of this is to determine the \textit{largest} modulus an independence root can obtain among all graphs of order $n$ in a specific family of graphs. In particular, the bound we obtained for $\maxmod{n}$ seems to be much too large when we restrict our attention to trees. In this section, we consider $\maxmodt{n}$, the maximum modulus of an independence root over all trees on $n$ vertices. 

%In this section, we prove a tight upper bound on the modulus of independence roots of forests on $n$ vertices and find a forest that has an independence root asymptotic to our bound of $O(2^{\frac{n}{2}})$.

%Brown et al. \cite{BDN2000} mention that even for trees, the independence roots can be large in modulus and give as an example the tree $T_k$ pictured in Figure~\ref{fig:Tk} to show that it has a root in the interval $(-\infty,-2^{k-1})$. Since a tree of order $n$ has independence number at least $n/2$, they also note in \cite{BrownNowakowski2001} that it is not sensible to fix the independence number of a tree to look at the largest root. However, what is the bound on the modulus of independence root if we fix $n$? 
%
%The graph $T_k$ will play a large role in this, we will tighten the interval that its large real root can be found from $(-\infty,-2^{k-1})$ to $[-2^{k}-k,-2^{k})$ with a more careful Intermediate Value Theorem argument and by showing that $2^{\tfrac{n-1}{2}}+\tfrac{n-1}{2}$ is an upper bound on the modulus of all independence roots of all forests on $n$ vertices.

Let $T_k$ be the tree obtained by gluing $k$ copies of $P_3$ together at a leaf (see Figure~\ref{fig:Tk}). This tree is known \cite{Sagan1988} to have the largest number of maximal independent sets among trees on $2k+1$ vertices and we will show that it also has the largest ratio of consecutive coefficients among all such trees, and therefore provides an upper bound on $\maxmodt{n}$.

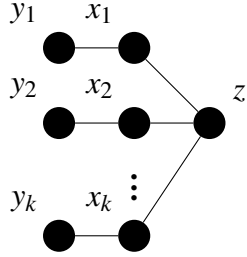
\begin{figure}[h]
\def\c{1}
\def\r{1}
\centering
\scalebox{\c}{
\begin{tikzpicture}
\begin{scope}[every node/.style={circle,thick,draw,fill}]
     \node[label=above left:$y_{k}$] (7) at (-2*\r,-1.5*\r) {};
     \node[label=above left:$x_{k}$] (6) at (-1*\r,-1.5*\r) {}; 
     \node[label=above left:$y_2$] (5) at (-2*\r,0*\r) {};
     \node[label=above left:$x_2$] (4) at (-1*\r,0*\r) {};   
    
    \node[label=above left:$y_1$](3) at (-2*\r,1*\r) {};
    \node[label=above left:$x_1$] (2) at (-1*\r,1*\r) {};
    \node[label=above right:$z$] (1) at (0*\r,0*\r) {}; 
\end{scope}

\begin{scope}
    \path [-] (7) edge node {} (6);
    \path [-] (1) edge node {} (6);
    
    \path [-] (1) edge node {} (4);
    \path [-] (4) edge node {} (5);
    
    \path [-] (1) edge node {} (2);
    \path [-] (2) edge node {} (3);
 
\end{scope}

\path (4) -- node[auto=false]{\textbf{\vdots}} (6);
\end{tikzpicture}}
\caption{The tree $T_k$ on $2k+1$ vertices that has independence root in $[-2^k-k,-2^k)$.}%
\label{fig:Tk}%
\end{figure}

\begin{theorem}[\cite{Wilf1986}]\label{thm:maxsetstree}
If $G$ is a tree of order $n\ge 2$, then

$$\xi(G)\le t'(n)= \left\{
\begin{array}{ll}
      2^{\frac{n-3}{2}} &~~~~\textnormal{if } n\textnormal{ is odd} \\
      & \\
      2^{\frac{n-2}{2}}+1 &~~~~\textnormal{if } n\textnormal{ is even}\\
\end{array}\right..$$

\end{theorem}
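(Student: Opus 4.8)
The plan is to induct on $n$, establishing simultaneously the claimed bound for trees and the auxiliary bound
$$\xi(F)\le 2^{\lfloor m/2\rfloor}\qquad\textnormal{for every forest }F\textnormal{ on }m\ge 1\textnormal{ vertices.}$$
The auxiliary bound is a formal consequence of the tree bound: by Proposition~\ref{prop:deletion} the quantity $\xi$ is multiplicative over components, and for a tree $T_i$ on $m_i$ vertices one has $\xi(T_i)\le t'(m_i)\le 2^{\lfloor m_i/2\rfloor}$ (an elementary inequality for $m_i\ge 2$, trivial for $m_i=1$), so $\xi(F)=\prod_i\xi(T_i)\le 2^{\sum_i\lfloor m_i/2\rfloor}\le 2^{\lfloor m/2\rfloor}$. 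Thus at each stage it suffices to prove the tree bound, and the base cases $n\le 5$ can be checked by inspection.

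For the inductive step, let $T$ be a tree on $n$ vertices. If $T$ is a star then its unique maximum independent set is its set of leaves, so $\xi(T)=1\le t'(n)$. Otherwise pick a longest path in $T$; its initial segment $v_0-v_1-v_2$ has $v_0$ a leaf, every neighbour of $v_1$ other than $v_2$ a leaf, and length at least $3$. Put $T'=T-\{v_0,v_1\}$, which is again a tree (deleting a pendant path keeps the graph connected), now on $n-2$ vertices, and note $\alpha(T)=\alpha(T')+1$. The combinatorial heart of the argument is that every maximum independent set of $T$ meets $\{v_0,v_1\}$ in exactly one vertex (it must contain $v_0$ or $v_1$, else adding $v_0$ enlarges it): those containing $v_0$ restrict bijectively to the maximum independent sets of $T'$, while those containing $v_1$ restrict bijectively to the independent sets of $T-N[v_1]$ of size $\alpha(T)-1$.

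From here I would split into three cases. First, if $\deg(v_1)\ge 3$, then replacing $v_1$ by two of its leaf-neighbours enlarges any independent set containing it, so no maximum independent set of $T$ contains $v_1$; each maximum independent set of $T$ is then the set of leaf-neighbours of $v_1$ together with a maximum independent set of the tree obtained by deleting $v_1$ and those leaves, a tree on at most $n-3$ vertices, whence $\xi(T)\le t'(n-3)\le t'(n)$ by induction (using that $t'(a)\le t'(b)$ whenever $b\ge a+2$). Second, if $\deg(v_1)=2$ and the grandparent $w:=v_2$ lies in every maximum independent set of $T'$, then $\alpha(T'-w)<\alpha(T')$, so no maximum independent set of $T$ contains $v_1$ and $\xi(T)=\xi(T')\le t'(n-2)\le t'(n)$. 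Third, if $\deg(v_1)=2$ and $\alpha(T'-w)=\alpha(T')$, then the maximum independent sets of $T-N[v_1]=T'-w$ are exactly the maximum independent sets of $T'$ that avoid $w$, and we obtain
$$\xi(T)=\xi(T')+\xi(T'-w)=2\,\xi(T')-m_w,$$
where $m_w$ is the number of maximum independent sets of $T'$ containing $w$.

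The whole difficulty is concentrated in this third case, specifically in the interplay between the parity of $n$ and the extra ``$+1$'' in $t'$ for even orders. When $m_w\ge 1$ we get $\xi(T)\le 2t'(n-2)-1$, which is exactly $t'(n)$ for $n$ even and strictly smaller for $n$ odd. When $m_w=0$ we get $\xi(T)=2\xi(T')$, which is $\le 2t'(n-2)=t'(n)$ for $n$ odd but overshoots $t'(n)$ by one for $n$ even; in that last situation I would feed $T'-w$ -- a forest on the odd number $n-3$ of vertices -- into the auxiliary bound to get $\xi(T')=\xi(T'-w)\le 2^{(n-4)/2}<t'(n-2)$, whence $\xi(T)\le 2^{(n-2)/2}<t'(n)$. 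So the main obstacle is not any single hard estimate but organizing the induction correctly: one must carry the forest bound in tandem with the tree bound and apply it precisely when a maximum independent set is forced to miss the grandparent $w$. Tracing the equality cases through this argument should then pin down the extremal trees -- balanced spiders with two extra leaves at the centre for odd $n$, and caterpillars built from $P_4$ by attaching pendant copies of $P_2$ for even $n$.
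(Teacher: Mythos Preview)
The paper does not prove this theorem at all: it is quoted as a known result with a citation and is used only as input to Lemma~\ref{lem:topratiobound}. There is therefore no ``paper's own proof'' to compare against; what you have written is a self-contained argument where the paper offers none.

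Your argument is essentially correct and follows the standard longest-path induction used for this type of result. One organizational slip: you assert that $T'=T-\{v_0,v_1\}$ ``is again a tree (deleting a pendant path keeps the graph connected)'' \emph{before} the case split, but this fails when $\deg(v_1)\ge 3$, since the other leaf-neighbours of $v_1$ become isolated. This does no damage, because in Case~1 you never use $T'$ (you pass instead to the tree obtained by deleting $v_1$ together with all its leaf-neighbours), and in Cases~2--3 you have $\deg(v_1)=2$ so $T'$ really is a tree. Just move the ``$T'$ is a tree'' claim inside the $\deg(v_1)=2$ branch. The identity $\alpha(T)=\alpha(T')+1$ and the bijections you describe hold regardless, so nothing else changes.

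The handling of the delicate subcase---$n$ even, $\deg(v_1)=2$, $m_w=0$---is the right idea: since $w$ lies in no maximum independent set of $T'$, one has $\xi(T')=\xi(T'-w)$, and feeding the forest $T'-w$ on $n-3$ (odd) vertices into the auxiliary bound $\xi(F)\le 2^{\lfloor m/2\rfloor}$ gives $\xi(T)\le 2^{(n-2)/2}<t'(n)$. Carrying the forest bound alongside the tree bound (and noting it is available for all orders $<n$ once the tree bound is known for those orders) is exactly what makes the induction close. The monotonicity claim $t'(a)\le t'(b)$ for $b\ge a+2$ that you invoke in Case~1 is easy to check directly for $a\ge 2$, and the smaller tree there has at least two vertices since the longest path has at least three edges.
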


We need to extend our notation to $\maxmodf{n}$, the maximum modulus of an independence root of a forest of order $n$; clearly $\maxmodt{n}\le \maxmodf{n}$. The following lemma will be required in proving an upper bound on the ratio of consecutive coefficients of the independence polynomials of forests.
\begin{lemma}\label{lem:topratiobound}
If $F$ is a forest on $n$ vertices, $n\ge 2$, and $v\in V(F)$, then
$$\frac{\xi(F)}{\xi(F-v)}\le \left\{
\begin{array}{ll}
      2^{\frac{n-3}{2}}+1 &~~~~\textnormal{if }n\textnormal{ is odd} \\
      & \\
      2^{\frac{n-2}{2}}+1 &~~~~\textnormal{if }n\textnormal{ is even}\\
\end{array}\right.. $$
\end{lemma}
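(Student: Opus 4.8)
The plan is to prove the bound by induction on $n$, deleting a leaf and its neighbour and using Proposition~\ref{prop:deletion}(i) together with Theorem~\ref{thm:maxsetstree}. First I would dispose of the trivial and base cases: if $F$ has no edges then $\xi(F) = \xi(F-v)$ for every non-isolated... well, for every $v$, so the ratio is $1$, which is below the claimed bound for $n \ge 2$; and small cases $n = 2, 3$ can be checked directly. I would also handle the case where $v$ is isolated in $F$ (then $\xi(F)/\xi(F-v) = 1$) and, more importantly, observe that if $v$ lies in a component that is a single vertex or if $F-v$ has the same independence number as $F$ with every maximum independent set of $F$ avoiding $v$, the ratio is at most $1$. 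So the interesting case is when $v$ belongs to every maximum independent set of $F$, i.e. $\alpha(F) = \alpha(F-v) + 1$ and $\xi(F) = i_{\alpha(F-N[v])}^{F-N[v]} = \xi(F-N[v])$ restricted appropriately.

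The core of the argument is to bound $\xi(F)$ in terms of the structure at a well-chosen vertex. Here I would pick $v$ to be a leaf of $F$ (working in one tree component containing an edge) with neighbour $u$; then $N[v] = \{u, v\}$ and every maximum independent set either contains $v$ (and then avoids $u$, giving a maximum independent set of $F - N[v]$ plus $v$) or contains $u$ but not $v$. This yields $\xi(F) \le \xi(F - N[v]) + (\text{number of max independent sets of } F - v \text{ of the right size that contain } u)$. The cleanest approach: write $i(F,x) = i(F-v,x) + x\, i(F - N[v], x)$ and compare leading coefficients. When $\alpha(F) = \alpha(F-v) + 1 = \alpha(F - N[v]) + 2$... actually $\alpha(F-N[v]) \le \alpha(F-v) \le \alpha(F-N[v])+1$, so if $v$ is in every max independent set we get $\alpha(F) = \alpha(F-N[v]) + 1$ and $\xi(F) = \xi(F-N[v])$ exactly (no contribution from $F - v$ since $\alpha(F-v) \le \alpha(F) - 1 < \alpha(F)$... wait, need $\alpha(F-v) < \alpha(F)$, which holds). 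Hmm — but I also need $\xi(F-v)$ in the denominator, not $\xi(F - N[v])$. So I would relate $\xi(F-v)$ to $\xi(F-N[v])$: since $F - N[v] = (F-v) - u$, and $F - v$ has $n-1$ vertices.

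The key estimate will therefore be: $\xi(F) = \xi(F - N[v])$ and $\xi(F - v) \ge$ something. Since $F - v$ is a forest on $n - 1$ vertices, I would bound $\xi(F - N[v]) = \xi((F-v) - u)$ from above using Theorem~\ref{thm:maxsetstree} applied to the component of $F - v$ containing $u$ (which has at least... possibly $u$ is isolated in $F-v$, i.e. $v$ is a leaf of a $P_2$ component, in which case $F - N[v]$ has $n - 2$ vertices and $\xi(F - N[v]) = \xi(F-v)$, ratio $1$). Otherwise the component $C$ of $u$ in $F - v$ is a tree on $c \ge 2$ vertices with $\xi(C) \le t'(c)$, and $\xi(F - N[v]) = \xi(C - u)\cdot \prod(\text{other components})$ while $\xi(F-v) = \xi(C)\cdot\prod(\text{other components})$, so $\xi(F)/\xi(F-v) = \xi(C-u)/\xi(C)$. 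But that ratio is $\le 1$ unless $u$ is in every max independent set of $C$... This is getting circular, suggesting the right move is a direct induction: the ratio $\xi(F)/\xi(F-v)$ reduces to a ratio of the same shape on the component $C$, which has $c \le n$ vertices, or to a quantity bounded by Theorem~\ref{thm:maxsetstree} directly.

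I expect the main obstacle to be the case analysis on how $\alpha$ behaves under deleting $v$ and under deleting $N[v]$, and in particular pinning down exactly when $\xi(F) > \xi(F-v)$ can occur and by how much — the worst case should be driven by copies of $P_3$ (as in $T_k$), giving the factor $2^{(n-3)/2}$ or $2^{(n-2)/2}$, with the "$+1$" coming from a leftover small component or from $\alpha(F-N[v])$ contributing a single extra maximal set. Once the structural reduction is set up so that the hard case is $v$ a leaf whose deletion drops $\alpha$, the bound should follow by combining the induction hypothesis on $F - v$ (or on the relevant component) with the note following Theorem~\ref{thm:maxsetstree}; the "odd/even" split in the statement mirrors exactly the split in Theorem~\ref{thm:maxsetstree}, so the arithmetic of $2^{(n-3)/2}$ versus $2^{(n-2)/2}$ should match up cleanly, with the parity of $n - |N[v]| = n - 2$ being the same as that of $n$.
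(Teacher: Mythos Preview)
Your proposal never lands on the key step, and in one place it makes an actual error. The paper's proof is essentially one line: by multiplicativity of $\xi$ over disjoint unions, if $H$ is the component of $F$ containing $v$ then
\[
\frac{\xi(F)}{\xi(F-v)} \;=\; \frac{\xi(H)}{\xi(H-v)} \;\le\; \xi(H),
\]
since $\xi(H-v)\ge 1$. Now $H$ is a \emph{tree} on at most $n$ vertices, so Theorem~\ref{thm:maxsetstree} bounds $\xi(H)$ by $\max\{t'(i):1\le i\le n\}$, which equals $2^{(n-3)/2}+1$ for $n$ odd (attained at the even value $i=n-1$) and $2^{(n-2)/2}+1$ for $n$ even. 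No induction, no analysis of $\alpha$, no discussion of $N[v]$.

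Your draft circles this idea (you note the ratio ``reduces to a ratio of the same shape on the component $C$'') but never makes the crucial trivial estimate $\xi(H)/\xi(H-v)\le \xi(H)$; instead you keep trying to compare $\xi(F)$ with $\xi(F-N[v])$ and to track how $\alpha$ changes, which is irrelevant here. There is also a genuine mistake: you write ``pick $v$ to be a leaf of $F$,'' but $v$ is \emph{given} in the hypothesis, not chosen --- so the leaf-deletion induction you sketch does not apply to the statement as written. Finally, even after reducing to the component, your proposed induction stalls when $F$ is already a tree: the reduction is vacuous and you have no smaller instance to invoke. The missing ingredient throughout is simply to drop the denominator and quote Wilf's bound on $\xi$ of a tree.
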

\begin{proof}
Let $F=H_1\cup H_2\cup\cdots\cup H_k$, where $k\ge 1$, and each $H_i$ is a connected component of $F$. Suppose, without loss of generality, that $v\in H_k$. Then $$F-v=H_1\cup H_2\cup\cdots H_{k-1}\cup F',$$ where $F'$ is the forest obtained from deleting $v$ from $H_k$ (note that if $v$ was an isolated vertex in $F$, then $F'$ may have no vertices and $\xi(H_k)=1$). Now we have,

\begin{align*}
\frac{\xi(F)}{\xi(F-v)}&=\frac{\xi(H_1)\cdot\xi(H_2)\cdots\xi(H_k)}{\xi(H_1)\cdot\xi(H_2)\cdots\xi(H_{k-1})\cdot\xi(F')}\\
&=\frac{\xi(H_k)}{\xi(F')}\\
&\le \xi(H_k)\\
&\le \max\{t'(i):1\le i\le n\}~~~~~~~~~~~~~~~~~~\text{(from Theorem~\ref{thm:maxsetstree})}\\
&=\left\{
\begin{array}{ll}
      \max\left\lbrace 2^{\frac{n-(2i+1)}{2}}+1:i=1,2,\ldots,n \right\rbrace &~~~~\text{if }n\text{ is odd} \\
      & \\
      \max\left\lbrace 2^{\frac{n-2i}{2}}+1:i=1,2,\ldots,n \right\rbrace &~~~~\text{if }n\text{ is even} \\
\end{array}\right.\\
&=\left\{
\begin{array}{ll}
      2^{\frac{n-3}{2}}+1 &~~~~\text{if }n\text{ is odd} \\
      & \\
      2^{\frac{n-2}{2}}+1 &~~~~\text{if }n\text{ is even}\\
\end{array}\right..
\end{align*}

%\QED
\end{proof}

\begin{theorem}\label{thm:maxratioforest}
For $n\ge 1$,
\[ \maxmodf{n}\le \left\{
\begin{array}{ll}
      2^{\frac{n-1}{2}}+\frac{n-1}{2} & \textnormal{if $n$ is odd} \\
      & \\
      2^{\frac{n-2}{2}}+\frac{n}{2} & \textnormal{if $n$ is even}\\
\end{array} 
\right.. \]
\end{theorem}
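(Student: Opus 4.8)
The plan is to mimic the structure of the proof of Theorem~\ref{thm:graphrootbound}: show by induction on $n$ that for every forest $F$ on $n$ vertices, the quantity $R$ from the Enestr\"{o}m--Kakeya Theorem applied to $i(F,x)$ is bounded by the claimed value, so that the bound on $\maxmodf{n}$ follows immediately from Theorem~\ref{thm:EK}. The base cases (small $n$, say $n\le 3$ or so) are handled by direct inspection, and the case where $F$ has no edges is trivial since then the only root is $-1$. So assume $F$ has at least one edge. A forest always has a leaf; in fact, as in Lemma~\ref{lem:indnum}, one can pick a non-isolated vertex $v$ with $\alpha(F)=\alpha(F-v)\ge\alpha(F-N[v])+1$, but for a tree/forest it is cleanest to take $v$ to be a \emph{leaf} whose unique neighbour $u$ has the property that... — actually the key simplification over the graph case is that we can choose $v$ to be a support vertex's leaf, so that $N[v]=\{u,v\}$ has size exactly $2$ and $F-N[v]$ has $n-2$ vertices. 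Then $\alpha(F)=\alpha(F-v)=\alpha(F-N[v])+1$ automatically (every maximum independent set uses $v$ or $u$), which puts us squarely in the analogue of Case~1 of Theorem~\ref{thm:graphrootbound}, avoiding the messy Case~2 subcases entirely.

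With such a $v$, Proposition~\ref{prop:deletion}(i) gives $i(F,x)=i(F-v,x)+x\,i(F-N[v],x)$, and since $\alpha(F-v)=\alpha(F-N[v])+1$ the coefficients combine cleanly:
\[
i(F,x)=1+\sum_{k=1}^{\alpha(F-v)}\left(i_k^{F-v}+i_{k-1}^{F-N[v]}\right)x^k,
\]
so the consecutive-coefficient ratios are $\tfrac1n$ and $\dfrac{i_k^{F-v}+i_{k-1}^{F-N[v]}}{i_{k+1}^{F-v}+i_k^{F-N[v]}}$ for $1\le k\le\alpha(F-N[v])$. The first is clearly below the target. For the second, one bounds numerator-against-denominator termwise using the inductive hypothesis applied to the $(n-1)$-vertex forest $F-v$ and the $(n-2)$-vertex forest $F-N[v]$: $i_k^{F-v}\le(\text{bound}(n-1))\,i_{k+1}^{F-v}$ and $i_{k-1}^{F-N[v]}\le(\text{bound}(n-2))\,i_k^{F-N[v]}$, and since $\mathrm{bound}(n-1)$ and $\mathrm{bound}(n-2)$ are both at most $\mathrm{bound}(n)$ — here we need, e.g., $2^{(n-2)/2}+\tfrac{n-2}{2}\le 2^{(n-1)/2}+\tfrac{n-1}{2}$ when $n$ is odd and the analogous inequality when $n$ is even, which are elementary — the whole ratio is at most $\mathrm{bound}(n)$. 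This also covers the one remaining ratio that might in principle reach to the very top coefficient, namely when $\alpha(F-v)=\alpha(F-N[v])+1$ so the sum runs exactly to $\alpha(F)$; the top ratio is $i_{\alpha(F)-1}^F/i_{\alpha(F)}^F = (i_{\alpha(F-v)-1}^{F-v}+i_{\alpha(F-N[v])-1}^{F-N[v]})/(\xi(F-v)+\xi(F-N[v]))$, still of the same form.

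The main obstacle I anticipate is making sure the leaf $v$ can always be chosen so that $N[v]$ has size exactly $2$ \emph{and} every ratio arising truly has the two-term form above — in particular verifying there is no leftover "Case 2"-type ratio of the shape $i_m^{F-v}/i_{m+1}^{F-v}$ with $m$ large, which would only bring in $\mathrm{bound}(n-1)$ and hence still be fine, so actually even this is harmless. The genuinely delicate point is whether the inductive hypothesis is strong enough at the top coefficient: unlike the graph proof we do not obviously need a forest-analogue of Lemma~\ref{lem:1maxindset}, because choosing $v$ to be a leaf forces $N[v]$ small and keeps us out of the problematic regime, but I should double-check the sub-case where $F-v$ or $F-N[v]$ is edgeless or has very few vertices, handling those by hand or by folding them into the base cases. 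If instead a clean leaf choice is not available in some degenerate configuration (e.g. $F$ is a union of $K_2$'s and isolated vertices), then $i(F,x)=(1+x)^j(1+x)^{\cdots}$ is computed directly and the claim checked explicitly. I would also record the elementary monotonicity inequalities $\mathrm{bound}(n-1),\mathrm{bound}(n-2)\le\mathrm{bound}(n)$ (in both parity cases) as a short preliminary observation, and invoke Lemma~\ref{lem:topratiobound} and Theorem~\ref{thm:maxsetstree} as needed to bound $\xi$-terms if a residual top-coefficient estimate is required.
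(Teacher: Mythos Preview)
Your overall plan matches the paper's: bound the consecutive-coefficient ratios by induction and then apply Enestr\"{o}m--Kakeya, choosing $v$ to be a leaf so that $|N[v]|=2$. But there is a genuine error in the claim that for a leaf $v$ one has $\alpha(F)=\alpha(F-v)=\alpha(F-N[v])+1$ ``automatically''. What the parenthetical argument shows is only $\alpha(F)=\alpha(F-N[v])+1$; it says nothing about $\alpha(F-v)$. In fact $\alpha(F-v)$ can equal $\alpha(F-N[v])$, not $\alpha(F-N[v])+1$: take $F=P_3$ and $v$ a leaf, so $\alpha(F)=2$ while $\alpha(F-v)=\alpha(F-N[v])=1$. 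More generally this happens exactly when $v$ lies in every maximum independent set of $F$, which is common (e.g.\ the tree $T_k$ in Figure~\ref{fig:Tk} with $v=y_1$).

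In that situation the expansion picks up an extra top monomial,
\[
i(F,x)=1+\sum_{k=1}^{\alpha}\bigl(i_k^{F-v}+i_{k-1}^{F-N[v]}\bigr)x^k+i_\alpha^{F-N[v]}\,x^{\alpha+1},
\]
and the top ratio is $\dfrac{\xi(F-v)+i_{\alpha-1}^{F-N[v]}}{\xi(F-N[v])}$. Your termwise bound fails on the first summand: you would need $i_{\alpha+1}^{F-v}$ in the denominator, and it is zero. This is exactly the ratio the paper isolates as the hard case and handles with Lemma~\ref{lem:topratiobound}, writing
\[
\frac{\xi(F-v)}{\xi(F-N[v])}+\frac{i_{\alpha-1}^{F-N[v]}}{i_\alpha^{F-N[v]}}
\]
and bounding the first term by $2^{(n-3)/2}+1$ (resp.\ $2^{(n-4)/2}+1$) via that lemma and the second by the inductive hypothesis on $F-N[v]$; the two pieces combine \emph{tightly} to give the stated bound. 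So Lemma~\ref{lem:topratiobound} is not a backup for degenerate configurations but the crux of the argument, and the case you dismissed as avoidable is in fact the main one. Once you correct the $\alpha$-relation and insert this top-ratio computation, your proof coincides with the paper's.
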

\begin{proof}
As in the proof of Theorem~\ref{thm:graphrootbound}, we actually prove a stronger result, bounding the ratios of consecutive coefficients. The Enestr\"{o}m-Kakeya Theorem then applies to obtain the bound the roots. We proceed by induction on $n$. 

For $n=1,2,3,4$ the results hold by checking all forests of order at most $4$. Suppose the result holds for all $4\le k\le n-1$ and let $F$ be a forest on  $n$ vertices. Note that if $F=\overline{K_{n}}$, then the largest ratio of consecutive coefficients of $i(F,x)$ can easily be verified to be $n$ which is less than the result in either case, so suppose $F$ has at least one edge and therefore at least one leaf. Let $v$ be a leaf of $F$ and let $u$ be adjacent to $v$. Note that $\alpha(F-\{u,v\})\le\alpha(F-v)\le \alpha(F-\{u,v\})+1$ and for our argument, we assume that $\alpha(F-v)= \alpha(F-\{u,v\})$ as our arguments will hold (and be even shorter) when $\alpha(F-v)= \alpha(F-\{u,v\})+1$. To simplify notation, let $\alpha=\alpha(F-v)=\alpha(F-\{u,v\})$. By Proposition~\ref{prop:deletion}, we have
\begin{align*}
i(F,x)&=i(F-v,x)+x\cdot i(F-\{u,v\},x)\\
&=\sum_{k=0}^{\alpha}i_{k}^{F-v}x^k+x\sum_{k=0}^{\alpha}i_{k}^{F-\{u,v\}}x^k\\
&=1+\sum_{k=1}^{\alpha}\left(i_{k}^{F-v}+i_{k-1}^{F-\{u,v\}}\right)x^k+i_{\alpha}^{F-\{u,v\}}x^{\alpha+1}. \tag{2}\label{eq:2}
\end{align*}

We need to show that $\frac{i_{k}^F}{i_{k+1}^F}$ is bounded above by the desired value and from (\ref{eq:2}), we see that $\frac{i_{k}^F}{i_{k+1}^F}$ can take on the following forms, 

$$
\frac{1}{n}, \frac{i_{k}^{F-v}+i_{k-1}^{F-\{u,v\}}}{i_{k+1}^{F-v}+i_{k}^{F-\{u,v\}}}\text{ for $k=1,2,\ldots\alpha(F-\{u,v\})-1$, and } \frac{i_{\alpha}^{F-v}+i_{\alpha-1}^{F-\{u,v\}}}{i_{\alpha}^{F-\{u,v\}}}.
$$

The first ratio, $\tfrac{1}{n}$, clearly satisfies the desired bound regardless of the parity of $n$. We now only need to verify the remaining two forms of $\frac{i_{k}^F}{i_{k+1}^F}$. We will do this in two cases depending on the parity of $n$. \\

\noindent\textbf{Case 1:} $n$ is odd. \\

We apply the inductive hypothesis to get,
\begin{align*}
\frac{i_{k}^{F-v}+i_{k-1}^{F-\{u,v\}}}{i_{k+1}^{F-v}+i_{k}^{F-\{u,v\}}}&\le \frac{\left(2^{\frac{n-3}{2}}+\frac{n-1}{2}\right)i_{k+1}^{F-v}+\left(2^{\frac{n-3}{2}}+\frac{n-3}{2}\right)i_{k}^{F-\{u,v\}}}{i_{k+1}^{F-v}+i_{k}^{F-\{u,v\}}}\\
&\le \frac{\left(2^{\frac{n-3}{2}}+\frac{n-1}{2}\right)\left(i_{k+1}^{F-v}+i_{k}^{F-\{u,v\}}\right)}{i_{k+1}^{F-v}+i_{k}^{F-\{u,v\}}}\\
&=2^{\frac{n-3}{2}}+\tfrac{n-1}{2}\\
&< 2^{\frac{n-1}{2}}+\tfrac{n-1}{2}. 
\end{align*}

For the last ratio, we have,
\begin{align*}
\frac{i_{\alpha}^{F-v}+i_{\alpha-1}^{F-\{u,v\}}}{i_{\alpha}^{F-\{u,v\}}}&=\frac{i_{\alpha}^{F-v}}{i_{\alpha}^{F-\{u,v\}}}+\frac{i_{\alpha-1}^{F-\{u,v\}}}{i_{\alpha}^{F-\{u,v\}}}\\
&\le \frac{\xi(F-v)}{\xi(F-\{u,v\})}+2^{\frac{n-3}{2}}+\tfrac{n-3}{2} &\text{(by the inductive hypothesis)}\\
&\le 2^{\frac{n-3}{2}}+1+2^{\frac{n-3}{2}}+\tfrac{n-3}{2} &\text{(by the Lemma~\ref{lem:topratiobound})}\\
&=2^{\frac{n-1}{2}}+\tfrac{n-1}{2}.
\end{align*}
Therefore, the result holds when $n$ is odd by the Enestr\"{o}m-Kakeya Theorem.\\

\noindent\textbf{Case 2:} Suppose that $n$ is even. 

Then we apply the inductive hypothesis to get,
\begin{align*}
\frac{i_{k}^{F-v}+i_{k-1}^{F-\{u,v\}}}{i_{k+1}^{F-v}+i_{k}^{F-\{u,v\}}}&\le \frac{\left(2^{\frac{n-2}{2}}+\frac{n-2}{2}\right)i_{k+1}^{F-v}+\left(2^{\frac{n-4}{2}}+\frac{n-2}{2}\right)i_{k}^{F-\{u,v\}}}{i_{k+1}^{F-v}+i_{k}^{F-\{u,v\}}}\\
&\le \frac{\left(2^{\frac{n-2}{2}}+\frac{n-2}{2}\right)\left(i_{k+1}^{F-v}+i_{k}^{F-\{u,v\}}\right)}{i_{k+1}^{F-v}+i_{k}^{F-\{u,v\}}}\\
&=2^{\frac{n-2}{2}}+\tfrac{n-2}{2}\\
&< 2^{\frac{n-2}{2}}+\tfrac{n}{2}. 
\end{align*}

For the last ratio, we have,
\begin{align*}
\frac{i_{\alpha}^{F-v}+i_{\alpha-1}^{F-\{u,v\}}}{i_{\alpha}^{F-\{u,v\}}}&=\frac{i_{\alpha}^{F-v}}{i_{\alpha}^{F-\{u,v\}}}+\frac{i_{\alpha-1}^{F-\{u,v\}}}{i_{\alpha}^{F-\{u,v\}}}\\
&\le \frac{\xi(F-v)}{\xi(F-\{u,v\})}+2^{\frac{n-4}{2}}+\tfrac{n-2}{2} &\text{(by the inductive hypothesis)}\\
&\le 2^{\frac{n-4}{2}}+1+2^{\frac{n-4}{2}}+\tfrac{n-2}{2} &\text{(by the Lemma~\ref{lem:topratiobound})}\\
&=2^{\frac{n-2}{2}}+\tfrac{n}{2}.
\end{align*}
Therefore,the result holds when $n$ is even by the Enestr\"{o}m-Kakeya Theorem.
\end{proof}

\begin{corollary}\label{cor:maxtreerootupperbound}
For $n\ge 1$,
\[ \maxmodt{n}\le \left\{
\begin{array}{ll}
      2^{\frac{n-1}{2}}+\frac{n-1}{2} &~~~~\textnormal{if $n$ is odd} \\
      & \\
      2^{\frac{n-2}{2}}+\frac{n}{2} &~~~~\textnormal{if $n$ is even}\\
\end{array} 
\right.. \]\hfill\QED
\end{corollary}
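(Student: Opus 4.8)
The plan is straightforward: Corollary~\ref{cor:maxtreerootupperbound} is an immediate consequence of Theorem~\ref{thm:maxratioforest}, since every tree is a forest. First I would observe that if $T$ is a tree on $n$ vertices, then $T$ is in particular a forest on $n$ vertices, so $\maxmodt{n} \le \maxmodf{n}$ (this inequality is already noted in the text preceding Lemma~\ref{lem:topratiobound}). Then I would simply invoke Theorem~\ref{thm:maxratioforest}, which gives the stated upper bound on $\maxmodf{n}$ in both the odd and even cases, and conclude that the same bound holds for $\maxmodt{n}$.

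Concretely, the proof is one line in each parity: for $n$ odd,
\[
\maxmodt{n} \le \maxmodf{n} \le 2^{\frac{n-1}{2}} + \tfrac{n-1}{2},
\]
and for $n$ even,
\[
\maxmodt{n} \le \maxmodf{n} \le 2^{\frac{n-2}{2}} + \tfrac{n}{2},
\]
where both second inequalities are exactly Theorem~\ref{thm:maxratioforest}. There is no induction or case analysis to redo here — all the work has already been done in establishing the forest bound, and restricting from forests to trees can only shrink the supremum.

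There is essentially no obstacle: the only thing to check is the (trivial) containment of the class of trees on $n$ vertices inside the class of forests on $n$ vertices, and the monotonicity of the maximum modulus under passing to a subclass. I would not belabor this; a two-sentence proof citing Theorem~\ref{thm:maxratioforest} and the observation $\maxmodt{n}\le\maxmodf{n}$ suffices, and indeed the excerpt already marks the corollary with \QED rather than a full proof environment, signalling that it is meant to be read off directly from the preceding theorem.
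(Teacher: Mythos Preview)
Your proposal is correct and matches the paper's approach exactly: the corollary is marked with \QED\ and no proof, because it follows immediately from Theorem~\ref{thm:maxratioforest} together with the trivial inequality $\maxmodt{n}\le\maxmodf{n}$ already noted in the text. There is nothing to add.
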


We remark that, at least in terms of the bounds on the ratio of consecutive coefficients, this is best possible as there are forests that achieve these bounds. Let $n$ be odd, and consider the graph $T_{\frac{n-1}{2}}$ as previously defined and pictured in Figure~\ref{fig:Tk}. The independence polynomial of this tree has $2^{\frac{n-1}{2}}+\frac{n-1}{2}$ as its last ratio of consecutive coefficients. If $n$ is even then look at the forest $T_{\frac{n-2}{2}}\cup K_1$, whose independence polynomial has $2^{\frac{n-2}{2}}+\frac{n}{2}$ as its last ratio of consecutive coefficients.

We have shown that the bounds on the ratio of consecutive coefficients are tight, but are these bounds tight on the roots? It is not always the case that the upper bound on the moduli of the roots of a polynomial is tight, even for trees and forests. Take for example, the tree $K_{1,30}$ which has $30$ as an upper bound on the roots from Enestr\"{o}m-Kakeya but its actual root of largest modulus is approximately $2.023777128$. It gets even worse when we consider taking the disjoint union of $k$ copies of $K_{1,30}$. This forest will have the same root of maximum modulus but the bound on the root from Enestr\"{o}m-Kakeya is $30k$, which is unbounded. Fortunately, it turns out that the bound we found in Theorem~\ref{thm:maxratioforest} is asymptotically tight when $n$ is odd.

For the case where $n$ is even in the next proof we require the definition of the tree $T_k'$ as shown in Figure~\ref{fig:Tk'}. Let $T_k'$ be the graph obtained by adding two leaves to each vertex in $K_2$ and then gluing a leaf of the resulting graph to the central vertex of $T_k$.
\begin{figure}[htp]
\def\c{1}
\def\r{1}
\centering
\scalebox{\c}{
\begin{tikzpicture}
\begin{scope}[every node/.style={circle,thick,draw,fill}]

     \node[label=above:$e$] (12) at (3*\r,-1*\r) {};
     \node[label=above left:$d$] (11) at (3*\r,1*\r) {}; 
     \node[label=above left:$c$] (10) at (2*\r,0*\r) {};
     \node[label=above left:$b$] (9) at (1*\r,-1*\r) {};        
      \node[label=above left:$a$] (8) at (1*\r,0*\r) {};
  
     \node[label=above left:$y_{k}$] (7) at (-2*\r,-1.5*\r) {};
     \node[label=above left:$x_{k}$] (6) at (-1*\r,-1.5*\r) {}; 
     \node[label=above left:$y_2$] (5) at (-2*\r,0*\r) {};
     \node[label=above left:$x_2$] (4) at (-1*\r,0*\r) {};   
    
    \node[label=above left:$y_1$](3) at (-2*\r,1*\r) {};
    \node[label=above left:$x_1$] (2) at (-1*\r,1*\r) {};
    \node[label=above right:$z$] (1) at (0*\r,0*\r) {};
%    \node[label=above right:$v_4$] (4) at (1*\r,0*\r) {};
%    \node[label=above right:$v_3$] (3) at (2*\r,0*\r) {};
%    \node[label=above right:$v_2$] (2) at (3*\r,0*\r) {};
%    \node[label=above right:$v_1$] (1) at (4*\r,0*\r) {};
%    \node[label=above right:$a$] (8) at (2*\r,-1*\r) {};
%    \node[label=above right:$b$] (9) at (2*\r,-2*\r) {};
%    \node[label=above right:$c$] (10) at (2*\r,1*\r) {};   
\end{scope}

\begin{scope}
    \path [-] (7) edge node {} (6);
    \path [-] (1) edge node {} (6);
    
    \path [-] (1) edge node {} (4);
    \path [-] (4) edge node {} (5);
    
    \path [-] (1) edge node {} (2);
    \path [-] (2) edge node {} (3);
    
    \path [-] (1) edge node {} (8);
    \path [-] (8) edge node {} (9);
    \path [-] (8) edge node {} (10);
    \path [-] (10) edge node {} (11);
    \path [-] (10) edge node {} (12); 
\end{scope}

\path (4) -- node[auto=false]{\textbf{\vdots}} (6);
\end{tikzpicture}}
\caption{The tree $T_k'$ on $2(k+3)$ vertices that has an independence root in $[-2^{k+1}-k-4,-2^k)$.}%
\label{fig:Tk'}%
\end{figure}
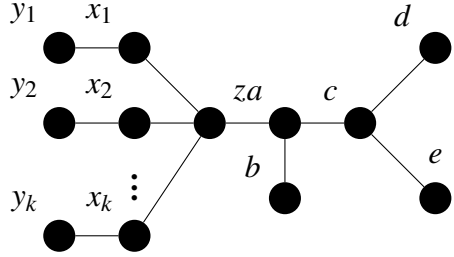

\begin{proposition}\label{prop:treerootslowerbound}
For all $n\ge 1$,
\[ \maxmodt{n}\ge \left\{
\begin{array}{ll}
      2^{\frac{n-1}{2}} &~~~~\textnormal{if } n \textnormal{ is odd} \\
      & \\
      2^{\frac{n-6}{2}} &~~~~\textnormal{if } n \textnormal{ is even} \\
\end{array} 
\right.. \]
\end{proposition}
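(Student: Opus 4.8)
The plan is to mimic the structure of Proposition~\ref{prop:graphroots}: exhibit an explicit tree (or family of trees) whose independence polynomial we can compute in closed form, and then use the Intermediate Value Theorem to locate a real root of large modulus. For $n$ odd, write $n = 2k+1$ and use the tree $T_k$ from Figure~\ref{fig:Tk}. First I would compute $i(T_k,x)$ by applying Proposition~\ref{prop:deletion}(i) at the central vertex $z$: deleting $z$ leaves $k$ disjoint copies of $P_3$ (so a factor $(1+3x+x^2)^k$), while $T_k - N[z]$ leaves $k$ disjoint copies of $P_2 = K_2$ together with... actually $T_k-N[z]$ is $k$ copies of the single edge $y_i x_i$ minus $x_i$, i.e.\ $k$ isolated vertices $y_i$, giving a factor $(1+x)^k$. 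Hence
\begin{align*}
i(T_k,x) &= (1+3x+x^2)^k + x(1+x)^k.
\end{align*}
(I would double-check this small computation, but the shape is forced by the structure of $T_k$.)

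Next I would evaluate this at $x = -2^k$ (the candidate lower bound is $2^{\frac{n-1}{2}} = 2^k$) and check the sign. Since $1 + 3x + x^2$ at $x=-2^k$ is roughly $2^{2k} - 3\cdot 2^k + 1 > 0$ and $(1+x)^k = (1-2^k)^k$ has sign $(-1)^k$, while the leading behaviour as $x\to -\infty$ of $i(T_k,x)$ is governed by the $x^{2k+1}$ term (coefficient $i_k^{T_k}=\xi(T_k)>0$, odd degree) so $\lim_{x\to-\infty} i(T_k,x) = -\infty$. The key computation is to show $i(T_k,-2^k)$ has sign opposite to $-\infty$, i.e.\ is positive (or more precisely, that $i(T_k,x)$ changes sign on $(-\infty, -2^k)$); then the IVT produces a root in $(-\infty,-2^k)$, giving $\maxmodt{n} = \maxmodt{2k+1} \ge 2^k = 2^{\frac{n-1}{2}}$. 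I would handle tiny cases ($n=1,3$) separately, as the excerpt does for graphs, since $k\ge 1$ or $k\ge 2$ may be needed for the sign inequalities to go through.

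For $n$ even, write $n = 2(k+3) = 2k+6$ and use the tree $T_k'$ of Figure~\ref{fig:Tk'}, which the caption tells us has a root in $[-2^{k+1}-k-4, -2^k)$; the target bound is $2^{\frac{n-6}{2}} = 2^k$. Again I would compute $i(T_k',x)$ via Proposition~\ref{prop:deletion}, most cleanly by deleting the central vertex $z$ of the $T_k$-part: $T_k' - z$ splits into $k$ copies of $P_3$ and the gadget built on $K_2$ with two pendant leaves at each endpoint (call it $H$, on $6$ vertices), while $T_k' - N[z]$ splits into $k$ isolated vertices and $H - a$ (a $P_3$ with two extra pendant leaves at the degree-$2$ end, on $5$ vertices). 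Computing $i(H,x)$ and $i(H-a,x)$ directly (each is a small polynomial), one gets a closed form $i(T_k',x) = (1+3x+x^2)^k\, i(H,x) + x(1+x)^k\, i(H-a,x)$, and then evaluating at $x=-2^k$ and tracking signs as before yields a real root of modulus exceeding $2^k$. The main obstacle is purely computational bookkeeping: getting $i(H,x)$ and $i(H-a,x)$ right and verifying the sign of the resulting expression at $x=-2^k$ is positive while the degree of $i(T_k',x)$ is odd (so the $x\to-\infty$ limit is $-\infty$), so that a sign change — hence a root — occurs on $(-\infty,-2^k)$. As in the odd case, a couple of small values of $n$ (here $n=2,4$, and possibly $n=6$ if $k\ge 1$ is required) must be checked by hand or by direct computation.
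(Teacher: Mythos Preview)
Your overall approach is exactly the paper's: compute $i(T_k,x)$ and $i(T_k',x)$ in closed form via Proposition~\ref{prop:deletion} and locate a real root left of $-2^k$ by the IVT. However, your explicit computation of $i(T_k,x)$ is off, and the error propagates. When you delete the central vertex $z$ from $T_k$ you do \emph{not} get $k$ copies of $P_3$: recall $T_k$ is built by gluing $k$ copies of $P_3$ \emph{at a common leaf}, and that common leaf \emph{is} $z$. So $T_k-z$ is $k$ disjoint edges $x_iy_i$, i.e.\ $k$ copies of $K_2$, giving $i(T_k-z,x)=(1+2x)^k$. The correct formula is
\[
i(T_k,x)=(1+2x)^k+x(1+x)^k,
\]
which has degree $\alpha(T_k)=k+1$, not $2k+1$ (the independence polynomial has degree $\alpha$, never $n$ unless the graph is edgeless). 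With this in hand the paper's sign check is immediate: $i(T_k,-2^k)=(-1)^k\bigl((2^{k+1}-1)^k-(2^{k+1}-2)^k\bigr)$ has sign $(-1)^k$, while the leading term $x^{k+1}$ forces sign $(-1)^{k+1}$ as $x\to-\infty$, and IVT yields the root.

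The same slip recurs in the even case. In $T_k'$ one leaf of the six-vertex gadget has been \emph{identified} with $z$, so $T_k'-z$ is $k$ copies of $K_2$ together with the \emph{five}-vertex tree on $\{a,b,c,d,e\}$, and $T_k'-N[z]$ is $k$ isolated $y_i$'s together with the four-vertex forest on $\{b,c,d,e\}$. Carrying this through gives
\[
i(T_k',x)=(1+2x)^k(1+5x+6x^2+2x^3)+x(1+x)^k(1+4x+4x^2+x^3),
\]
and the paper then checks the sign at $x=-2^k$ (using the identity $h(x)=g(x)-x(x+1)^2$ for the two cubics) to conclude, handling $k=0,1$ separately just as you anticipated. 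So your plan is sound; only the bookkeeping about what remains after deleting $z$ needs repair.
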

\begin{proof}
The proof is similar to the proof of Proposition~\ref{prop:graphroots}, finding trees that have real independence roots of large modulus. \\

\noindent \textbf{Case 1:} $n$ is odd.\\

If $n=1$, then the result clearly holds so we may assume $n\ge 1$. Let $n=2k+1$ (note $k\ge 1$), so that $k=\frac{n-1}{2}$, and set $T=T_k$ as in Figure~\ref{fig:Tk}. A simple calculation via Proposition~\ref{prop:deletion} shows that $i(T,x)=(1+2x)^k+x(1+x)^k$. We will use the Intermediate Value Theorem to show that $i(T,x)$ has a real root to the left of $-2^k$. Now,

\begin{align*}
i(T,-2^k)&=(1-2^{k+1})^k-2^k(1-2^k)^k\\
&=(-1)^k\left((2^{k+1}-1)^k-(2^{k+1}-2)^k\right),
\end{align*}
so $i(T,-2^k)$ has the same sign as $(-1)^k$. On the other hand, $i(T,x)$ has sign $(-1)^{k+1}$ as $x$ tends to $\infty$We now compute the limit as $x$ tends to $-\infty$. Thus, $i(T_k,x)$ alternates sign on $(-\infty,-2^k]$, so by IVT it must have a real root in the interval $(-\infty,-2^k)$. We remark that from Theorem~\ref{thm:maxratioforest}, that $i(T,x)$ actually has a real root in the interval $[-2^k-k,-2^k)$.\\

\noindent\textbf{Case 2:} $n$ is even.

For $n=2$ and $4$, the result is clear. For $n\ge 6$, we will show that $T_k'$, the graph in Figure~\ref{fig:Tk'}, has a real root to the left of $-2^k$. Let $n=2(k+3)$ for $k\ge 0$. If $k=0$, then $i(T_k',x)=(1+x)^2(1+4x+x^2)$ which has roots $-1$, $-2+\sqrt{3}$, and $-2-\sqrt{3}$, with $-2-\sqrt{3}$ being to the left of $-2^{\frac{6-6}{2}}=-1)$. If $k=1$, then $i(T_k',x)={x}^{5}+9\,{x}^{4}+22\,{x}^{3}+21\,{x}^{2}+8\,x+1$, which has its largest root at approximately $-5.7833861$, which is to the left of $-2^{\frac{8-6}{2}}=-4$. Since the result holds for $k=0,1$, we may now assume that $k\ge 2$. 

Using Proposition~\ref{prop:deletion}, we find that $$i(T_k',x)=(1+2x)^k(1+5x+6x^2+2x^3)+x(1+x)^k(1+4x+4x^2+x^3).$$ Let $g(x)=1+5x+6x^2+2x^3$ and $h(x)=1+4x+4x^2+x^3$. We can easily verify that $g(x)<0$ for all $x\le -2$ and $h(x)<0$ for all $x\le -3$. Moreover, $h(x)=g(x)-x(x+1)^2$. We consider the function $$f(x)=(-2x-1)^k(1+5x+6x^2+2x^3)+x(-x-1)^k(1+4x+4x^2+x^3),$$ so that $i(T_k',x)=(-1)^kf(x)$. Now,

\begin{align*}
f(-2^k)&=(2^{k+1}-1)^kg(-2^k)-2^k(2^k-1)^kh(-2^k)\\
&=(2^{k+1}-1)^kg(-2^k)-2^k(2^k-1)^k(g(-2^k)+2^k(1-2^k)^2)\\
&=g(-2^k)((2^{k+1}-1)^k-(2^{k+1}-2)^k)-2^{2k}(2^k-1)^{k+2}
\end{align*}

\noindent and since $g(-2^k)$ and  $-2^{2k}(2^k-1)^{k+2}$ are both negative for $k\ge 2$, it follows that $f(-2^k)<~0$. Therefore, $i(T_k',x)$ has sign $(-1)^k(-1)=(-1)^{k+1}$. On the other hand, $i(T_k',x)$ has sign $(-1)^{k+4}=(-1)^{k}$ as $x$ tends to $\infty$. Thus, by the IVT, $i(T_k',x)$ has a real root to the left of $-2^k$. From, Theorem~\ref{thm:maxratioforest} and the Enestr\"{o}m-Kakeya Theorem, $i(T,x)$ has no roots in $(-\infty,-2^{k+1/2}-k-3)$, so $i(T_k',x)$ has a root in the interval $[-2^{k+1/2}-k-3,-2^k)$.

\end{proof}

Tables~\ref{tab:boundcomptreeodd} and \ref{tab:boundcomptreeeven} show values of $\maxmodt{n}$ for small values of $n$ in comparison to our bounds.

\begin{table}[h]
\parbox{.45\linewidth}{
\begin{tabular}{|c|c|c|c|}
\hline
$n$ & $2^{\frac{n-1}{2}}$ & $\maxmodt{n}$ & $2^{\frac{n-1}{2}}+\frac{n-1}{2}$\\
\hline
$3$ & $2$ & $2.61803398900000$ & $3$\\
\hline
$5$ & $4$ & $5.04891733952231$ & $6$\\
\hline
$7$ & $8$ & $9.49699733952714$ & $11$\\
\hline
$9$ & $16$ & $17.9705962347393$ & $20$\\
\hline
$11$ & $32$ & $34.4632033453548$ & $37$\\
\hline
$13$ & $64$ & $66.9662907779610$ & $70$\\
\hline
$15$ & $128$ & $131.473379027662$ & $135$\\
\hline
$17$ & $256$ & $259.980782682655$ & $264$\\
\hline
\end{tabular}
\caption{Comparing $\maxmodt{n}$ to our bounds for odd $n$ }\label{tab:boundcomptreeodd}
}
\hfill
\parbox{.45\linewidth}{
\begin{tabular}{|c|c|c|c|}
\hline
$n$ & $2^{\frac{n-6}{2}}$ & $\maxmodt{n}$ & $2^{\frac{n-2}{2}}+\frac{n}{2}$\\
\hline
$2$ & $0.25$ & $0.5$ & $2$\\
\hline
$4$ & $0.5$ & $1.77423195656734$ & $4$\\
\hline
$6$ & $1$ & $3.732050808$ & $7$\\
\hline
$8$ & $2$ & $5.78338611675281$ & $12$\\
\hline
$10$ & $4$ & $10.0833151322046$ & $21$\\
\hline
$12$ & $64$ & $18.5001015662614$ & $38$\\
\hline
$14$ & $8$ & $34.9710040067543$ & $71$\\
\hline
$16$ & $16$ & $67.4665144832128$ & $136$\\
\hline
\end{tabular}
\caption{Comparing $\maxmodt{n}$ to our bounds for even $n$ }\label{tab:boundcomptreeeven}}
\end{table}

Although the bounds on $\maxmodt{n}$ are not as tight for even $n$ as for odd $n$, Corollary~\ref{cor:maxtreerootupperbound} and Proposition~\ref{prop:treerootslowerbound} give the following corollary for all $n$.

\begin{corollary}
$$\frac{\log_2(\maxmodt{n})}{n}=\frac{1}{2}+o(1).$$ \hfill\QED
\end{corollary}

%%%%%%%%%%%%%%%%%%%%%%%%%%%%%%%%%%%%%%%%%%%%%%%%%%%%%%%%%%%%%%%%%%%%%%%%

%%%%%%%%%%%%%%%%%%%%%%%%%%%%%%%%%%%%%%%%%%%%%%%%%%%%%%%%%%%%%%%%%%%%%%%%
\section{Conclusion and Open Problems}\label{sec:conclusion}
%%%%%%%%%%%%%%%%%%%%%%%%%%%%%%%%%%%%%%%%%%%%%%%%%%%%%%%%%%%%%%%%%%%%%%%%
We were able to prove upper and lower bounds on $\maxmod{n}$ and $\maxmodt{n}$ for all $n$ but questions remain about tightening our bounds for all graphs and about the growth rate of the moduli of independence roots for other families of graphs.

One highly structured and highly interesting family of graphs is the family of well-covered graphs \cite{Favaron1982,nowawcgirth,Plummer1970}, that is, graphs with all maximal independent sets of the same size. For each well-covered graph with independence number $\alpha$, it is known that all of its independence roots lie in the disk $|z|\le \alpha$ and there are well-covered graphs with independence roots arbitrarily close to the boundary \cite{BDN2000}. This difference between the independence roots of graphs and well-covered graphs begs the question of what happens for well-covered trees? Finbow et al. \cite{nowawcgirth} showed that every well-covered tree is obtained by attaching a leaf to every vertex of another tree. This construction of attaching a leaf to every vertex in a graph $G$ is know as the graph star operation, the resulting graph denoted $G^{\ast}$. Levit and Mandrescu \cite{Levit2008} proved a formula for $i(G^{\ast},x)$ in terms of $i(G,x)$ for all graphs $G$. Using Maple and nauty \cite{McKay2014}, we were able exploit this formula to verify that all well-covered trees on $n\le 40$ vertices have their independence roots contained in the unit disk! 

This makes it extremely tempting to conjecture that the independence roots of all well-covered trees are contained in the unit disk. However, the relationship between the independence roots of a tree and the independence roots of its well-covered extension are bound by the properties of M\"{o}bius transformations (this relationship was used by the authors in \cite{BrownCameron2018stability,BrownCameron2018vwc}). Drawing on the theory of these transfomrations, it can be shown that any tree $T$ with independence roots to the right of the line $\text{Re}(z)=\frac{1}{2}$, will yield a well-covered tree $T^{\ast}$ with independence roots outside of the unit disk. It was shown in \cite{BrownCameron2018stability}, that there are trees with independence roots arbitrarily far in the right half of $\mathbb{C}$, therefore, there are well-covered trees with independence roots outside of the unit disk. The tantalizing question remains:

\begin{question}
What is the maximum modulus of an independence root of a well-covered tree on $n$ vertices?
\end{question}

Our bounds on $\log_3(\maxmod{n})$ and $\log_2(\maxmodt{n})$ are very good asymptotically and a fairly good estimate for all $n$. Nevertheless, from computations with Maple and nauty \cite{McKay2014}, we have the following conjectures.

\begin{conjecture}
If $G$ is a graph on $n$ vertices, then for $n\ge 3$,
\[ \maxmod{n}\le \left\{
\begin{array}{ll}
      2\cdot 3^{\frac{n-3}{3}}+\frac{n}{3} &~~~~\text{if}\ n\equiv 0 \mod 3 \\      
      & \\
      3^{\frac{n-1}{3}}+\frac{n-1}{3} &~~~~\text{if}\ n\equiv 1 \mod 3 \\      
      & \\
      4\cdot 3^{\frac{n-5}{3}}+\frac{n+1}{3} &~~~~\text{if}\ n\equiv 2 \mod 3 \\      
\end{array} 
\right. \]
\end{conjecture}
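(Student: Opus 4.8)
The plan is to run the induction of Theorem~\ref{thm:graphrootbound} again, but with a sharper hypothesis. Write $B(n)$ for the conjectured right-hand side. I would prove by induction on $n$ that for every graph $G$ on $n$ vertices every ratio $i_k^G/i_{k+1}^G$ of consecutive coefficients of $i(G,x)$ is at most $B(n)$, so that the Enestr\"{o}m--Kakeya Theorem gives $\maxmod{n}\le B(n)$. The base cases --- $n$ up to some explicit bound, large enough to start the induction and to settle the degenerate subcases below --- are checked directly in Maple and \texttt{nauty}, as elsewhere in the paper. For the inductive step, an edgeless $G$ has only the root $-1$; otherwise choose, via Lemma~\ref{lem:indnum}, a non-isolated $v$ with $\alpha(G)=\alpha(G-v)\ge\alpha(G-N[v])+1$ and expand $i(G,x)=i(G-v,x)+x\,i(G-N[v],x)$ exactly as in~(\ref{eq:1}).

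The first observation is that $B$ is strictly increasing: its values are $3,4,6,8,11,15,21,30,\dots$, and $B(n-1)<B(n)$ in each residue class mod $3$. Since $v$ is non-isolated, $|N[v]|\ge 2$, so by the inductive hypothesis the constant ratio $1/n$ and every ``generic'' ratio $\frac{i_k^{G-v}+i_{k-1}^{G-N[v]}}{i_{k+1}^{G-v}+i_k^{G-N[v]}}$ are bounded by $\max\{B(n-1),B(n-|N[v]|)\}=B(n-1)<B(n)$, word for word as in Case~1 of Theorem~\ref{thm:graphrootbound}. As in that proof, everything comes down to the single ``top'' ratio
\[
\frac{i_{\alpha(G-N[v])+1}^{G-v}}{i_{\alpha(G-N[v])+2}^{G-v}}+\frac{\xi(G-N[v])}{i_{\alpha(G-N[v])+2}^{G-v}}
\]
occurring when $\alpha(G-v)>\alpha(G-N[v])+1$, in which case $|N[v]|\ge 3$ and (by the argument used in Case~2c of that proof, which in fact applies throughout Case~2) $\xi(G)=\xi(G-v)$.

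For this term I would keep the subcase structure of Theorem~\ref{thm:graphrootbound} (on whether $\alpha(G-N[v])+2$ is less than or equal to $\alpha(G-v)$, on $|N[v]|$, and on the size of $i_{\alpha(G-N[v])+2}^{G-v}$) but make two changes: carry $B(n-1)$ in place of $3^{(n-1)/3}+n-2$, and bound $\xi(G-N[v])$ by the \emph{exact} Moon--Moser value $g(n-|N[v]|)$ of Theorem~\ref{thm:maxsetsgraph} rather than by $3^{(n-|N[v]|)/3}$. Using $g$ is genuinely necessary: e.g.\ in the analogue of Case~2b with $n\equiv 0\pmod 3$ and $|N[v]|=4$ one gets $B(n-1)+g(n-4)=B(n)$ exactly, whereas $B(n-1)+3^{(n-4)/3}$ overshoots $B(n)$. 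A routine residue-by-residue check (splitting on $n\bmod 3$ and $|N[v]|\bmod 3$, using $i_{\alpha(G-N[v])+2}^{G-v}\ge\alpha(G-N[v])+3$ when that index is strictly below $\alpha(G-v)$, and $\ge 2$ when $\xi(G-v)\ge 2$) should then settle every subcase except the one in which $i_{\alpha(G-N[v])+2}^{G-v}=\xi(G-v)=1$ --- which, by the previous paragraph, forces $\xi(G)=1$.

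That last case is where I expect the real obstacle. There the top ratio equals $i_{\alpha(G)-1}^G\le\mu(G)+\alpha(G)-1$ as in Lemma~\ref{lem:1maxindset}, and to reach $B(n)$ one needs the sharpening of that lemma asserting that
\[
\mu(G)+\alpha(G)-1\le B(n)
\]
for every $n$-vertex graph $G$ with a \emph{unique} maximum independent set. This is best possible: the graphs $G_0,G_1,G_2$ of Figure~\ref{fig:conjecturedgraphs} each satisfy $\xi=1$ and attain equality (for instance $\mu(G_1)=3^{(n-1)/3}$ and $\alpha(G_1)=\tfrac{n-1}{3}+1$), which is exactly why the constants $2,1,4$ appear. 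The difficulty is that the Moon--Moser extremizer for $\mu$, a disjoint union of triangles, has $\xi=3^{n/3}$, so the uniqueness hypothesis must be exploited structurally. I would attempt this by induction over the components of $G$, in each component peeling off either a vertex lying in its unique maximum independent set or a low-degree vertex to branch on, with the goal of showing that an extremal $G$ is always a central vertex joined to all but one vertex of each triangle in a pile, together with at most two copies of $K_2$ attached at the center --- that is, $G_0$, $G_1$, or $G_2$. Granting this extremal lemma, the remaining subcases close as above and Enestr\"{o}m--Kakeya yields the stated bound on $\maxmod{n}$.
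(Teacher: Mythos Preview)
This statement is a \emph{conjecture} in the paper, not a theorem: the authors present it in the concluding section with no proof and no proof sketch. So there is no ``paper's own proof'' to compare against, and what you have written is an attack on an open problem.

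Your reduction is careful and, as far as it goes, correct. The monotonicity of $B$, the handling of the generic ratios via $\max\{B(n-1),B(n-|N[v]|)\}=B(n-1)$, the observation that the Case~2c argument $\xi(G)=\xi(G-v)$ in fact holds throughout Case~2, and the need to replace $3^{(n-|N[v]|)/3}$ by the exact Moon--Moser value $g(n-|N[v]|)$ all check out; the residue-by-residue verification of $B(n-1)+g(n-|N[v]|)/i^{G-v}_{\alpha(G-N[v])+2}\le B(n)$ in Cases~2a, 2b, 2c with $\xi(G-v)\ge 2$ really does go through. You have also correctly computed that $G_1$ (and analogously $G_0,G_2$) has $\mu+\alpha-1=B(n)$ on the nose, so the bound you are aiming for is tight.

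The genuine gap is exactly where you say it is: the extremal lemma asserting that every $n$-vertex graph $G$ with $\xi(G)=1$ satisfies $\mu(G)+\alpha(G)-1\le B(n)$. Your proposed attack --- ``induction over the components, peeling off either a vertex in the unique maximum independent set or a low-degree vertex'' --- is not a proof, and it is not clear that it becomes one. The difficulty is that the Moon--Moser extremal graphs for $\mu$ have $\xi$ exponentially large, so the hypothesis $\xi(G)=1$ must force a substantial structural drop in $\mu$, and one needs to quantify that drop uniformly across all residue classes while simultaneously tracking $\alpha$. Nothing in your outline explains how peeling a single vertex interacts with both $\mu$ and $\alpha$ in a way that respects the piecewise form of $B(n)$, nor why the extremal configurations must globally look like $G_0,G_1,G_2$ rather than, say, several such gadgets glued at their centers. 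Until that lemma is established --- and establishing it appears to be the actual content of the conjecture --- your argument remains a (sound) reduction rather than a proof.
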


\begin{conjecture}
The graphs $G_0,G_1,$ and $G_2$ are the only graphs to achieve $\maxmod{n}$.
\end{conjecture}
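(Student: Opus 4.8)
The plan is to prove the conjecture in two stages: first determine the exact value of $\maxmod{n}$ (equivalently, settle the preceding conjecture), and then show the maximum is attained only by $G_0$, $G_1$, $G_2$. For the exact value I would sharpen the induction behind Theorem~\ref{thm:graphrootbound}: rather than weakening $g(n)$ to $3^{n/3}$, carry the three residue classes of $n \bmod 3$ separately, feed in the precise Moon--Moser bound of Theorem~\ref{thm:maxsetsgraph}, and replace Lemma~\ref{lem:1maxindset} by a sharper estimate that controls $\mu(G)$ and $\alpha(G)$ \emph{jointly} for graphs with $\xi(G)=1$. One checks directly that for $G_1$ (and likewise $G_0$, $G_2$) the top coefficient ratio $i^{G_1}_{\alpha-1}/i^{G_1}_{\alpha}$ equals $3^{(n-1)/3}+\frac{n-1}{3}$ exactly, so the conjectured number is really a bound on the coefficient ratios; the delicate point of the induction is that making the leading coefficient large forces the subleading coefficient to be small, and quantifying this trade-off is the crux of Stage~1. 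A computer verification of more base cases would also be needed.

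Stage~1 would give, via Enestr\"{o}m--Kakeya, an upper bound on $\maxmod{n}$ of the conjectured size. For the matching lower bound one works with the three explicit polynomials from the proof of Proposition~\ref{prop:graphroots} --- $i(G_1,x)=(1+3x)^k+x(1+x)^k$, $i(G_0,x)=(1+3x)^k(1+2x)+x(1+x)^{k+1}$, $i(G_2,x)=(1+3x)^k(1+2x)^2+x(1+x)^{k+2}$ --- whose coefficient sequences are available in closed form. One shows that their largest real root sits just inside $-R$ (a single Newton step from $-R$, or Rouch\'{e} on a small disk, suffices) and computes it to within the claimed $O(n)$ error. Note, however, that the largest root of $i(G_1,x)$ is \emph{strictly} smaller than $R=3^{(n-1)/3}+\frac{n-1}{3}$ (the tables for trees exhibit exactly this phenomenon), so the extremal root lies in the interior of the Enestr\"{o}m--Kakeya annulus.

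For uniqueness the natural approach is an induction that tracks equality through the sharpened Theorem~\ref{thm:graphrootbound}: given a graph $G$ on $n$ vertices realising $\maxmod{n}$, choose $v$ as in Lemma~\ref{lem:indnum}; equality in the coefficient-ratio bound should force $G-v$ (and, in Case~1, $G-N[v]$) to be an extremal graph of smaller order, while any invocation of Moon--Moser forces the relevant induced subgraph to be a disjoint union of triangles. One then has to enumerate the ways of gluing such pieces and check that the only ones respecting all the equality constraints are the joins defining $G_0$, $G_1$, $G_2$.

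The main obstacle --- and the reason this is still a conjecture --- is that the extremal root does \emph{not} lie on the boundary of the Enestr\"{o}m--Kakeya annulus, so equality in Enestr\"{o}m--Kakeya (which would pin down the coefficient sequence as eventually geometric with ratio $R$) is unavailable, and the inequalities $\frac{a\,i_{k+1}+b\,i_k}{i_{k+1}+i_k}\le\max(a,b)$ used throughout the induction are attained only when one term vanishes or $a=b$ --- fragile conditions that do not obviously propagate. Worse, the root of maximum modulus of an assembled independence polynomial need not be the extremal root of any single piece, so ``$G$ realises $\maxmod{n}$'' does not localise cleanly along the induction. Making progress therefore seems to require a genuinely global comparison: showing that among all degree-$n$ polynomials arising as independence polynomials of $n$-vertex graphs, the one with the largest-modulus root is the one whose coefficient sequence matches $i(G_i,x)$ --- a statement about how the largest root depends on the whole coefficient vector, not just on consecutive ratios. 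This is precisely where I would expect a local, inductive argument to stall.
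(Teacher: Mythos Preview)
There is nothing to compare here: in the paper this statement is an \emph{open conjecture}, listed in Section~\ref{sec:conclusion} alongside three other conjectures and supported only by computations in Maple and nauty. The paper offers no proof, no proof sketch, and no proposed strategy for it.

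Your write-up is not a proof either, and to your credit you say so: you lay out a two-stage plan (sharpen the coefficient-ratio induction to get the exact value of $\maxmod{n}$, then track equality cases to pin down the extremal graphs) and then correctly identify the structural obstruction --- the extremal root lies strictly inside the Enestr\"{o}m--Kakeya annulus, so equality in the coefficient-ratio bounds is never attained and cannot be propagated through the induction. That diagnosis is accurate and matches why the authors left this open. So your proposal is a reasonable research outline, but it is not a proof, and there is no ``paper's own proof'' to measure it against.
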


\begin{conjecture}
If $T$ is a tree on $n$ vertices with $n\ge 6$ even , then,
\[ \maxmodt{n}\le 2^{\frac{n-4}{2}}+\frac{n+2}{2} \]
\end{conjecture}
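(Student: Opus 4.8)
The plan is to follow the template of Theorem~\ref{thm:maxratioforest}: by strong induction on $n$, bound every ratio of consecutive coefficients of $i(T,x)$ by $2^{\frac{n-4}{2}}+\frac{n+2}{2}$, and then conclude with the Enestr\"{o}m--Kakeya Theorem (Theorem~\ref{thm:EK}). Root $T$ and let $v$ be a deepest leaf; its neighbour $u$ then has exactly one non-leaf neighbour $u'$ (unless $T$ is a star $K_{1,n-1}$, disposed of directly since $i(K_{1,n-1},x)=(1+x)^{n-1}+x$ has all coefficient ratios at most $n-1$) and some number $s\ge 1$ of leaf neighbours $v=v_1,\dots,v_s$. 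Expanding $i(T,x)=i(T-v_1,x)+x\cdot i(T-\{u,v_1\},x)$ by Proposition~\ref{prop:deletion}, and assuming as in Theorem~\ref{thm:maxratioforest} that $\alpha(T-v_1)=\alpha(T-\{u,v_1\})=:\alpha$ (the other case is shorter), one obtains the ratio $\tfrac1n$, the ``internal'' ratios $\frac{i_k^{T-v_1}+i_{k-1}^{T-\{u,v_1\}}}{i_{k+1}^{T-v_1}+i_k^{T-\{u,v_1\}}}$, and the top ratio $\frac{\xi(T-v_1)}{\xi(T-\{u,v_1\})}+\frac{i_{\alpha-1}^{T-\{u,v_1\}}}{\xi(T-\{u,v_1\})}$. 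As in Theorem~\ref{thm:maxratioforest} only the top ratio carries exponential size; the internal ratios and $\tfrac1n$ are only polynomially large.

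The induction needs a sharper invariant than Theorem~\ref{thm:maxratioforest}, because estimating the two summands of the top ratio crudely --- the first by $\xi(T-v_1)\le 2^{\frac{n-4}{2}}$ (Wilf's bound, Theorem~\ref{thm:maxsetstree}, on the odd tree $T-v_1$), the second by the even-forest bound of Theorem~\ref{thm:maxratioforest} on $T-\{u,v_1\}$ --- only reproduces Corollary~\ref{cor:maxtreerootupperbound}, a factor of $2$ too large. To close the gap I would carry, besides the tree bounds (split by parity of $n$, the odd one being the already-tight $2^{\frac{n-1}{2}}+\frac{n-1}{2}$), a bound on the top ratio $i_{\alpha-1}/i_{\alpha}$ of a \emph{forest} that is the weak one $2^{\frac{n-2}{2}}+\frac n2$ in general but improves to exactly the tree bound when the forest has no isolated vertex (the top ratio of a product of positive-coefficient polynomials is the sum of the factors' top ratios, so for a forest whose components are trees on $\ge 2$ vertices this follows from the tree bounds and an elementary superadditivity of the function $2^{\frac{n-4}{2}}+\frac{n+2}{2}$). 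This is exactly what the recursion needs: when $s=1$ the forest $T-\{u,v_1\}$ is a \emph{connected} tree on $n-2$ vertices, so its top ratio is at most $2^{\frac{n-6}{2}}+\frac n2$ and adding $\xi(T-v_1)\le 2^{\frac{n-4}{2}}$ keeps the total at $3\cdot 2^{\frac{n-6}{2}}+\frac n2<2^{\frac{n-4}{2}}+\frac{n+2}{2}$; when $s\ge 3$ one factors $i(T-\{u,v_1\},x)=(1+x)^{s-1}i(C,x)$ with $C$ a tree on $n-1-s$ vertices, so the forest's top ratio is only $(s-1)+O\!\big(2^{\frac{n-2-s}{2}}\big)$ while $\xi(T-v_1)=\xi(C)\le 2^{\frac{n-6}{2}}+1$ (every maximum independent set of $T-v_1$ contains all of $v_2,\dots,v_s$), and the exponential slack absorbs the linear overshoot for all but finitely many $n$.

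The delicate case is $s=2$: here $T-\{u,v_1\}=K_1\cup C$ with $C$ a tree on $n-3$ vertices, the forest's top ratio is $1+r(C)$ with $r(C)$ the top ratio of $i(C,x)$, and the crude estimate fails when $r(C)$ is near its maximum $2^{\frac{n-4}{2}}+\frac{n-4}{2}$. I would need a \emph{stability} strengthening of Sagan's extremal characterization: an odd tree $C$ with $r(C)>2^{\frac{n-5}{2}}+\tfrac n2$ must lie in a short list of near-extremal trees (essentially $T_{(n-4)/2}$ and a few bounded perturbations), each of which forces $T$ into an explicit family --- $T_k$ with one extra pendant edge, $T_k$ with an attached vertex carrying one or more leaves, or the trees $T_k'$ of Figure~\ref{fig:Tk'} --- for which Proposition~\ref{prop:deletion} gives a closed form of $i(T,x)$ whose ratios are read off directly (for instance $T_k$ with a pendant at a middle vertex has $T-\{u,v_1\}=T_{k-1}\cup K_1$, $\xi(T-v_1)=\xi(T_k)=1$, and top ratio exactly $2^{\frac{n-4}{2}}+\frac n2<2^{\frac{n-4}{2}}+\frac{n+2}{2}$, while $T_k$ with a pendant at the centre falls in the ``$\alpha(T-v_1)=\alpha(T-\{u,v_1\})+1$'' case and has top ratio $O(n)$). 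When instead $r(C)$ is bounded away from the extremum, the forest's top ratio is at most $1+2^{\frac{n-6}{2}}+\frac{n-4}{2}$ and one pairs this with the companion fact that then $\xi(T-v_1)\le\xi(C)+\xi(C-u')$ is also not near $2^{\frac{n-4}{2}}$, since $\xi(C-u')/\xi(C)$ is large only when $C$ is close to $T_{(n-3)/2}$ with $u'$ its centre --- again an already-handled configuration.

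The step I expect to be the real obstacle is precisely this pair of stability statements --- a robust form of Wilf's theorem and of Sagan's characterization of $T_k$, identifying the trees and forests within a factor of $2$ of the extremal top ratio --- and the bookkeeping needed to thread them, together with the weak/strong forest bounds and the auxiliary $\xi$-bounds, consistently through all cases of the induction, including the tree-to-forest transition and the recombination of a forest from its components. Once those are in hand the remaining estimates, and the base cases $n\le N_0$ (checked by computer as in Theorem~\ref{thm:maxratioforest}), are routine.
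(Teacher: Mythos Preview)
This statement is listed in the paper as a \emph{conjecture}; the authors give no proof, only computational evidence. So there is nothing in the paper to compare your proposal against, and what you have written is a programme for attacking an open problem rather than a reconstruction of an existing argument.

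Beyond the gap you yourself flag (the stability versions of Wilf's and Sagan's theorems in the $s=2$ case), there is a more basic problem you have overlooked. Your claim that ``the internal ratios and $\tfrac1n$ are only polynomially large'' is not what happens in Theorem~\ref{thm:maxratioforest}, and it is false here. In Theorem~\ref{thm:maxratioforest} the internal ratios are handled by the weighted-average bound
\[
\frac{i_k^{F-v}+i_{k-1}^{F-\{u,v\}}}{i_{k+1}^{F-v}+i_k^{F-\{u,v\}}}\le \max\Bigl(\text{bound for }F-v,\ \text{bound for }F-\{u,v\}\Bigr),
\]
and both bounds on the right are exponential; the argument works only because both are strictly \emph{below} the target. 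For the conjecture with $n$ even, however, $T-v_1$ is a tree on an \emph{odd} number $n-1$ of vertices, and the (already tight) odd bound is $2^{\frac{n-2}{2}}+\frac{n-2}{2}$, which \emph{exceeds} the conjectured even target $2^{\frac{n-4}{2}}+\frac{n+2}{2}$ for every $n\ge 8$. Since the top ratio of $i(T-v_1,x)$ shows up among the internal ratios of $i(T,x)$ (at $k=\alpha-1$), the weighted-average step fails outright whenever $T-v_1$ is close to the odd extremal tree $T_{(n-2)/2}$. In other words, the parity mismatch between $T$ and $T-v_1$ breaks the induction before you ever reach the top ratio.

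Any proof along these lines will therefore need either a different deletion (e.g.\ removing a matched pair so that both pieces keep even order), or a genuinely finer inductive invariant that tracks how far $T-v_1$ is from the odd extremal family---which is exactly the same kind of stability information you already identified as missing in the $s=2$ analysis. So the obstacle is not confined to the top ratio; it pervades the whole scheme.
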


\begin{conjecture}
The trees $T_{\frac{n-1}{2}}$ and $T'_{\frac{n-6}{2}}$ (see Figures~\ref{fig:Tk}~and~\ref{fig:Tk'}) are the only trees to achieve $\maxmodt{n}$ for $n$ odd and even respectively.
\end{conjecture}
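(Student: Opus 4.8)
The plan is to promote the ratio-bound induction of Theorem~\ref{thm:maxratioforest} into a full equality-characterization, and then to bridge the genuine gap between extremality of coefficient ratios and extremality of root moduli. Write $n=2k+1$ in the odd case and $n=2k+6$ in the even case, so that the candidate extremal trees are $T_k$ and $T_k'$ with $i(T_k,x)=(1+2x)^k+x(1+x)^k$. By Proposition~\ref{prop:treerootslowerbound} these trees already furnish a root of modulus exceeding $2^{k}$, and by Corollary~\ref{cor:maxtreerootupperbound} no tree on $n$ vertices can exceed $2^{\frac{n-1}{2}}+\tfrac{n-1}{2}$ (odd) or $2^{\frac{n-2}{2}}+\tfrac{n}{2}$ (even). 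The task is therefore to show that the exact value $\maxmodt{n}$ is attained by, and \emph{only} by, these two families.

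First I would re-run the induction of Theorem~\ref{thm:maxratioforest}, but recording precisely when each inequality is an equality. The crucial tight step is the appeal to Lemma~\ref{lem:topratiobound}, which rests in turn on Theorem~\ref{thm:maxsetstree} and Sagan's characterization of the trees maximising $\xi$; equality there forces the component carrying the chosen leaf to be built from copies of $P_3$ glued at a common vertex, which is exactly the local structure of $T_k$ (respectively of $T_k'$ after accounting for the extra $K_2$-with-leaves gadget $a,b,c,d,e$ of Figure~\ref{fig:Tk'} in the even case). Peeling leaves one at a time and propagating these equality constraints should show that any tree whose final coefficient ratio meets the bound of Theorem~\ref{thm:maxratioforest} is isomorphic to $T_k$ or $T_k'$. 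This settles uniqueness at the level of the Enestr\"{o}m--Kakeya ratio.

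The main obstacle is that the Enestr\"{o}m--Kakeya bound is lossy for roots: as the $K_{1,30}$ example following Corollary~\ref{cor:maxtreerootupperbound} shows, attaining the extremal ratio is neither necessary nor sufficient for attaining the extremal root modulus, so the ratio analysis alone cannot decide the conjecture. To close this gap I would argue directly about the location of the dominant root. The key claim is that for a tree the independence root of largest modulus is its most negative real root, so that $\maxmodt{n}$ is governed by real root location and can be compared tree-by-tree using Intermediate Value Theorem sign arguments as in Proposition~\ref{prop:treerootslowerbound}. Granting this, one exploits the explicit form $i(T_k,x)=(1+2x)^k+x(1+x)^k$: dividing by $(1+x)^k$ reduces the root equation to $\bigl(\tfrac{1+2x}{1+x}\bigr)^k=-x$, and a monotonicity analysis of the left-hand side pins the dominant root to a single point of $[-2^k-k,-2^k)$. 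For an arbitrary competing tree $T$, the deletion recurrence $i(T,x)=i(T-v,x)+x\,i(T-\{u,v\},x)$ then lets me bound $i(T,x)$ on the ray $x<-2^k$ via the inductive hypothesis and show that the sign pattern forcing a root that far to the left can occur only when every inequality is tight, i.e.\ only when $T\cong T_k$.

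I expect two points to demand the most care. The first is establishing that the maximum-modulus root is real and simple for the extremal trees, and that it dominates all complex roots in modulus for \emph{every} tree, since independence polynomials of trees are not real-rooted in general; this will likely require a Perron--Frobenius-type positivity argument on the coefficient sequence, or a direct winding-number estimate ruling out complex roots outside the most negative real root. The second is converting the equality analysis into the \emph{strict} inequalities needed for uniqueness: here a perturbation argument showing that any single deviation from the $P_3$-gluing pattern strictly pulls the dominant root toward the origin should suffice. The even case is handled in parallel, the three-edge gadget contributing the fixed factors $g(x)=1+5x+6x^2+2x^3$ and $h(x)=1+4x+4x^2+x^3$ from Proposition~\ref{prop:treerootslowerbound} and shifting the dominant root into $[-2^{k+1/2}-k-3,-2^k)$.
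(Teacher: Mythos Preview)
The statement you are attempting to prove is a \emph{conjecture} in the paper, listed in Section~\ref{sec:conclusion} among the open problems; the paper offers no proof, only computational evidence. So there is nothing to compare your proposal against, and the relevant question is simply whether your outline constitutes a proof. It does not.

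You correctly identify the central obstruction yourself: the Enestr\"{o}m--Kakeya bound is lossy, so characterizing the trees that attain the extremal coefficient ratio (your first step) says nothing \emph{a priori} about which trees attain the extremal root modulus. Your proposed bridge is the claim that for every tree the independence root of largest modulus is its most negative real root. This is exactly the missing idea, and you do not prove it; you only gesture at a ``Perron--Frobenius-type positivity argument'' or a ``winding-number estimate''. Independence polynomials of trees are not real-rooted in general, so neither the Newton-inequality route nor any standard total-positivity argument applies, and there is no evident transfer matrix with nonnegative entries to which Perron--Frobenius would attach. Without this claim your entire second half collapses: the IVT comparison you sketch only locates real roots and cannot rule out a competing tree whose dominant root is complex with modulus slightly exceeding that of $T_k$.

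Even granting the real-dominance claim, the inductive sign argument you propose (``bound $i(T,x)$ on the ray $x<-2^k$ via the inductive hypothesis and show the sign pattern forcing a root that far left occurs only when every inequality is tight'') is not workable as stated. The inductive hypothesis from Theorem~\ref{thm:maxratioforest} controls ratios $i_k/i_{k+1}$, not the sign or magnitude of $i(T,x)$ at a point; translating a strict ratio inequality into a strict root inequality is precisely the hard direction of Enestr\"{o}m--Kakeya, which is known to be far from sharp. In short, your outline re-derives what the paper already proves (the ratio bound and its equality cases) and then assumes the genuinely open part.
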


\vspace{0.25in}
\noindent {\bf Acknowledgements}
\vspace{0.1in}
J.I. Brown acknowledges support from NSERC (grant application RGPIN-2018-05227).

% BibTeX users please use one of
%\bibliographystyle{spbasic}      % basic style, author-year citations
\bibliographystyle{spmpsci}      % mathematics and physical sciences
\bibliography{Maxmodtrees}   % name your BibTeX data base

% Non-BibTeX users please use
%\begin{thebibliography}{}
%%
%% and use \bibitem to create references. Consult the Instructions
%% for authors for reference list style.
%%
%\bibitem{RefJ}
%% Format for Journal Reference
%Author, Article title, Journal, Volume, page numbers (year)
%% Format for books
%\bibitem{RefB}
%Author, Book title, page numbers. Publisher, place (year)
%% etc
%\end{thebibliography}
\end{document}